\providecommand{\U}[1]{\protect\rule{.1in}{.1in}}
\newtheorem{theorem}{Theorem}[section]
\newtheorem{proposition}[theorem]{Proposition}
\newtheorem{proposition/definition}[theorem]{Proposition/Definition}
\newtheorem{lemma}[theorem]{Lemma}
\newtheorem{corollary}[theorem]{Corollary}
\newtheorem{conjecture}[theorem]{Conjecture}
\theoremstyle{definition}
\newtheorem{example}[theorem]{Example}
\theoremstyle{remark}
\renewcommand*\env@matrix[1][*\c@MaxMatrixCols c]{%
  \hskip -\arraycolsep
  \let\@ifnextchar\new@ifnextchar
  \array{#1}}
\begin{document}
\title{Every matrix is a product of Toeplitz matrices}
\author[K.~Ye]{Ke~Ye}
\address{Department of Mathematics, University of Chicago, Chicago, IL 60637-1514.}
\email{kye@math.uchicago.edu}
\author[L.-H.~Lim]{Lek-Heng~Lim}
\address{Computational and Applied Mathematics Initiative, Department of Statistics,
University of Chicago, Chicago, IL 60637-1514.}
\email[corresponding author]{lekheng@galton.uchicago.edu}
\begin{abstract}
We show that every $n \times n$ matrix is generically a product of $\lfloor
n/2 \rfloor+ 1$ Toeplitz matrices and always a product of at most $2n+5$
Toeplitz matrices. The same result holds true if the word `Toeplitz' is
replaced by `Hankel', and the generic bound  $\lfloor
n/2 \rfloor+ 1$ is sharp. We will see that these decompositions into Toeplitz or Hankel
factors are unusual: We may not in general replace the subspace of Toeplitz or Hankel
matrices by an arbitrary $(2n-1)$-dimensional subspace of ${n \times n}$ matrices.
Furthermore such decompositions do not exist if we require the  factors to be symmetric Toeplitz, persymmetric Hankel,
or circulant matrices, even if we allow an infinite number of factors. Lastly, we discuss how the Toeplitz and Hankel decompositions of a generic
matrix may be computed by either (i) solving a system of linear and quadratic equations if the number of factors is required to
be $\lfloor n/2\rfloor + 1$, or (ii) Gaussian elimination in $O(n^3)$ time if the number of factors is allowed to be $2n$.
\end{abstract}
\maketitle

One of the top ten algorithms of the 20th century \cite{10} is the
`decompositional approach to matrix computation' \cite{Decomp}. The fact that
a matrix may be expressed as a product of a lower-triangular with an
upper-triangular matrix (LU), or of an orthogonal with an upper-triangular
matrix (QR), or of two orthogonal matrices with a diagonal one (SVD), is a
cornerstone of modern numerical computations. As aptly described in
\cite{Decomp}, such matrix decompositions provide a platform on which a variety of scientific and engineering problems can be solved. Once computed,
they may be reused repeatedly to solve new problems involving the original matrix, and
may often be updated or downdated with respect to small changes in the
original matrix. Furthermore, they permit reasonably simple rounding-error
analysis and afford high-quality software implementations.

In this article, we describe a new class of matrix decompositions that differs from the classical ones mentioned above but are similar in spirit. Recall that a Toeplitz matrix is one whose entries are constant along the diagonals and a Hankel matrix is one whose entries are constant along the reverse diagonals:
\[
T=
\begin{bmatrix}
t_{0} & t_{1} &  & t_{n}\\
t_{-1} & t_{0} & \ddots & \\
& \ddots & \ddots & t_{1}\\
t_{-n} &  & t_{-1} & t_{0}
\end{bmatrix}
,\qquad H=
\begin{bmatrix}
h_{n} &  & h_{1} & h_{0}\\
& \iddots & h_{0} & h_{-1}\\
h_{1} & \iddots & \iddots & \\
h_{0} & h_{-1} &  & h_{-n}
\end{bmatrix}
\]
Given any $n\times n$ matrix $A$ over $\mathbb{C}$, we will
show that
\begin{equation}
A=T_{1}T_{2}\cdots T_{r}, \label{eq:toep}
\end{equation}
where $T_{1},\dots,T_{r}$ are all Toeplitz matrices and
\begin{equation}
A=H_{1}H_{2}\cdots H_{r}, \label{eq:hank}
\end{equation}
where $H_{1},\dots,H_{r}$ are all Hankel matrices. We shall call
\eqref{eq:toep} a \textit{Toeplitz decomposition} and \eqref{eq:hank} a
\textit{Hankel decomposition} of $A$ accordingly. The number $r$ is $\lfloor
n/2\rfloor+1$ for almost all $n\times n$ matrices (in fact holds generically)
and is at most $4\lfloor n/2\rfloor+5\leq2n+5$ for all $n\times n$ matrices.
Furthermore, the generic bound $\lfloor n/2\rfloor+1$ is sharp. So every matrix
can be approximated to arbitrary accuracy by a product of $\lfloor n/2\rfloor+1$ Toeplitz matrices
and one cannot do better than $\lfloor n/2\rfloor+1$.

The perpetual value of matrix decompositions alluded to in the first paragraph
deserves some elaboration. A Toeplitz or a Hankel decomposition of a given
matrix $A$ may not be as easy to compute as LU or QR; but once computed, these
decompositions can be reused ad infinitum for any problem involving $A$. If
$A$ has a known Toeplitz decomposition with $r$ factors, one can solve linear
systems in $A$ within $O(r  n\log^2 n)$ time via any of the superfast algorithms in
\cite{fast0, fast1, fast2, fast7a, fast3, fast4, fast5, fast6, fast7b}.
The utility of specialized algorithms tied to a specific matrix should not be
underestimated; for example, two other algorithms that made the top ten list, Fast
Fourier Transform \cite{fft} and Fast Multipole Method \cite{fmm}, may be viewed
as algorithms designed specially for the Fourier transform matrix and
the Helmholtz matrix respectively.

\section{Why Toeplitz}\label{sec:why}

The choice of Toeplitz factors is natural for two reasons. Firstly,  Toeplitz matrices are ubiquitous and one of the most well-studied and understood
classes of structured matrices. They arise in pure mathematics: algebra \cite{alg}, algebraic geometry \cite{alggeom}, analysis \cite{anlys}, combinatorics \cite{comb}, differential geometry \cite{diffgeom}, graph theory \cite{graph}, integral equations \cite{inteqn}, operator algebra \cite{opalg}, partial differential equations \cite{pde}, probability \cite{prob}, representation theory \cite{repth}, topology \cite{top}; as well as in applied mathematics: approximation theory \cite{approx}, compressive sensing \cite{compress}, numerical integral equations \cite{numie}, numerical integration \cite{numint}, numerical partial differential equations \cite{numpde}, image processing \cite{image}, optimal control \cite{control}, quantum mechanics \cite{quantum}, queueing networks \cite{queue},  signal
processing \cite{signal}, statistics \cite{stat}, time  series analysis \cite{time}, among other areas. 

Furthermore, studies of related objects such as Toeplitz determinants \cite{toepdet2}, Toeplitz kernels \cite{toepker}, $q$-deformed Toeplitz matrices \cite{qdeform}, and Toeplitz operators \cite{toepop}, have led to much recent  success and were behind some major developments in  mathematics (e.g.\ Borodin--Okounkov formula for Toeplitz determinant \cite{toepdet1}) and in physics (e.g.\ Toeplitz quantization \cite{quantize}).

Secondly, Toeplitz matrices have some of the most attractive computational properties and are amenable to a wide range of disparate algorithms.
Multiplication, inversion, determinant computation, LU and QR decompositions of $n\times n$ Toeplitz matrices may all be computed in $O(n^{2})$ time and in
numerically stable ways. Contrast this with the usual $O(n^{3})$ complexity
for arbitrary matrices. In an astounding article  \cite{fast0}, Bitmead and Anderson first showed that Toeplitz systems may in fact be solved in $O(n \log^{2} n)$ via the use of displacement rank; later advances have achieved essentially the same complexity (possibly $O(n \log^{3} n)$) but are practically more efficient. These algorithms are based on a variety of different techniques:  Bareiss algorithm \cite{fast1}, generalized Schur algorithm \cite{fast2}, FFT and Hadamard product \cite{fast3}, Schur complement \cite{fast4}, semiseparable matrices \cite{fast5}, divide-and-concur technique \cite{fast6} --- the last three have the added advantage of mathematically proven numerical stability. One can also find algorithms based on more unusual techniques, e.g., number theoretic transforms \cite{fast7a} or syzygy reduction \cite{fast7b}. 

In parallel to these direct methods, we should also mention the equally substantial body of work in iterative methods for Toeplitz matrices (cf. \cite{iter0, iter1, iter2} and the references therein). These are based in part on an elegant theory of optimal circulant preconditioners \cite{cond0, cond1, cond2}, which are the most complete and well-understood class of preconditioners in iterative matrix computations. In short, there is a rich plethora of highly efficient algorithms for Toeplitz matrices and the Toeplitz decomposition in \eqref{eq:toep} would often (but not always) allow one to take advantage of these algorithms for general matrices. To a large extent, what we said in this section about Toeplitz matrices also holds true for Hankel matrices.

\section{Algebraic geometry}

The classical matrix decompositions\footnote{We restrict our attention to matrix decompositions that exist
for arbitrary matrices (without requiring symmetry or positive definiteness). Of the six decompositions described in \cite{Decomp}, we discounted the Cholesky, Schur, and spectral decompositions as they do not meet these criteria.} LU, QR, and SVD correspond to the Bruhat,
Iwasawa, and Cartan decompositions of Lie groups \cite{Knapp, Howe}. In this
sense, LU, QR, SVD already exhaust the standard decompositions of Lie groups
and to go beyond these, we will have to look beyond Lie theory. The Toeplitz
and Hankel decompositions described in this article represent a new
class of matrix decompositions that do not arise rom Lie theoretic considerations
but from algebraic geometric ones.

As such, the results in this article will rely on some  very basic algebraic
geometry. Since we are writing for an applied and computational mathematics
readership, we will not assume any familiarity with algebraic geometry and
will introduce some basic terminologies in this section. Readers interested
in the details may refer to \cite{Taylor} for further information.  We will assume that we are working over $\mathbb{C}$.

Let $\mathbb{C}[x_{1},\dots, x_{n}]$ denote the ring of polynomials in
$x_{1},\dots, x_{n}$ with coefficients in $\mathbb{C}$. For $f_{1},\dots,
f_{r}\in\mathbb{C}[x_{1},\dots, x_{n}]$, the set
\[
X:=\{(a_{1},\dots, a_{n})\in\mathbb{C}^{n} : f_{j}(a_{1},\dots, a_{n})=0,\,
j=1,2,\dots, r\}
\]
is called an \textit{algebraic set} in $\mathbb{C}^{n}$ defined by
$f_{1},\dots, f_{r}$. If $I$ is the ideal generated by $f_{1},\dots, f_{r}$,
we also say that $X$ is an algebraic set defined by $I$.

It is easy to see that the collection of all algebraic sets in $\mathbb{C}^n$ is closed under arbitrary intersection 
and finite union, and contains both $\varnothing$ and $\mathbb{C}^n$. In other words, the
algebraic sets form the closed sets of a topology on $\mathbb{C}^n$ that we will 
call the \textit{Zariski topology}. It is a topology that is much coaser than the usual Euclidean 
or norm topology on $\mathbb{C}^n$. All topological notions appearing in this article, unless
otherwise specified, will be with respect to the Zariski topology.

For an algebraic set $X$ in $\mathbb{C}^{n}$ defined by an ideal $I$,  the \textit{coordinate ring} $\mathbb{C}[X]$ of $X$ is the quotient
ring $\mathbb{C}[x_{1}\dots,x_{n}]/I$; the \textit{dimension}
of $X$, denoted $\dim(X)$, is the dimension of $\mathbb{C}[X]$ as a ring.
Note that $\dim(\mathbb{C}^{n})=n$, agreeing with the usual notion of dimension. A single
point set has dimension zero.

A subset $Z$ of an algebraic set $X$ which is itself also an algebraic
set is called a \textit{closed subset} of $X$; it is called a \textit{proper closed subset} if $Z\subsetneq X$. An algebraic set is said to be \textit{irreducible} if 
it is not the union of two proper closed subsets. In this paper, an \textit{algebraic variety} will mean an irreducible algebraic set
and a \textit{subvariety} will mean an irreducible closed subset of some algebraic set.

Let $X$ and $Y$ be algebraic varieties and $\varphi:\mathbb{C}[Y]\rightarrow
\mathbb{C}[X]$ be a homomorphism of $\mathbb{C}$-algebras. Then we have an
\textit{induced map} $f:X\rightarrow Y$ defined by
\[
f(a_{1},\dots,a_{n})=\bigl(\varphi(y_{1})(a_{1} ,\dots,a_{n}),\dots
,\varphi(y_{m})(a_{1},\dots,a_{n})\bigr).
\]
In general, a map $f:X\rightarrow Y$ between two algebraic varieties $X$ and
$Y$ is said to be a \textit{morphism} if $f$ is induced by a homomorphism of
rings $\varphi:\mathbb{C}[Y]\rightarrow\mathbb{C}[X]$. Let $f$ be a morphism
between $X$ and $Y$. If its image is Zariski dense, i.e., $\overline{f(X)}=Y$, then $f$ is called a
\textit{dominant} morphism. If $f$ is bijective and $f^{-1}$ is also a
morphism, then we say that $X$ and $Y$ are \textit{isomorphic}, denoted $X
\simeq Y$, and $f$ is called an \textit{isomorphism}.

An \textit{algebraic group} is a group that is also an algebraic variety where
the multiplication and inversion operations are morphisms.

For algebraic varieties, we have the following analogue of the open mapping theorem
in complex analysis with dominant morphisms playing the role of open maps
\cite{Taylor}.

\begin{theorem}\label{open mapping theorem}
Let $f:X\rightarrow Y$ be a morphism of algebraic varieties. If
$f$ is dominant, then $f(X)$ contains an open dense subset of $Y$.
\end{theorem}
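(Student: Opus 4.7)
The plan is to invoke Chevalley's theorem on constructible sets and then extract an open dense subset by exploiting dominance together with the irreducibility of $Y$. Recall that a subset of $Y$ is \emph{constructible} if it is a finite union $\bigcup_{i=1}^{N}(U_{i}\cap Z_{i})$ with each $U_{i}$ open and each $Z_{i}$ closed in $Y$. Chevalley's theorem asserts that for any morphism of varieties $f:X\to Y$, the image $f(X)$ is constructible. Granting this, the theorem we want follows from a short topological argument.

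Assuming Chevalley's theorem, write $f(X)=\bigcup_{i=1}^{N}(U_{i}\cap Z_{i})$ with $U_{i}$ open, $Z_{i}$ closed, and (without loss of generality) each $U_{i}\cap Z_{i}$ nonempty. Taking closures,
\[
Y\;=\;\overline{f(X)}\;=\;\bigcup_{i=1}^{N}\overline{U_{i}\cap Z_{i}}\;\subseteq\;\bigcup_{i=1}^{N} Z_{i}.
\]
Since $Y$ is irreducible, some $Z_{i_{0}}=Y$, and then $U_{i_{0}}=U_{i_{0}}\cap Z_{i_{0}}\subseteq f(X)$. Because $Y$ is irreducible, any nonempty open subset of $Y$ is automatically dense, so $U_{i_{0}}$ is an open dense subset of $Y$ contained in $f(X)$.

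The main obstacle is therefore Chevalley's theorem itself. The route I would follow is Noetherian induction on $\dim Y$: by covering with affines, reduce to $X,Y$ affine with dominant $f$, so that $\varphi:\mathbb{C}[Y]\hookrightarrow\mathbb{C}[X]$ is injective. Factor $\mathbb{C}[X]$ as $\mathbb{C}[Y][t_{1},\dots,t_{r},s_{1},\dots,s_{k}]$ with $t_{1},\dots,t_{r}$ a transcendence basis of $K(X)/K(Y)$ and each $s_{j}$ algebraic over $\mathbb{C}[Y](t_{1},\dots,t_{r})$. After inverting a single nonzero element $g\in\mathbb{C}[Y]$, we may arrange that $\mathbb{C}[X]_{g}$ is free of finite rank over $\mathbb{C}[Y]_{g}[t_{1},\dots,t_{r}]$; the going-up theorem then makes the induced map surjective on spectra, so $f(X)$ contains $D(g)\times\mathbb{A}^{r}$ pushed forward, i.e.\ a nonempty open subset of $Y$. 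On the complement $V(g)$, which is a proper closed subvariety of $Y$, one iterates on each irreducible component using the induction hypothesis, and collects the finitely many locally closed pieces into a constructible decomposition of $f(X)$.

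In practice, rather than reprove Chevalley from scratch, I would simply cite it (as the paper does via \cite{Taylor}) and present only the short deduction in the second paragraph above, since it is that deduction which is actually used in the sequel to convert generic surjectivity statements about multiplication maps of Toeplitz matrices into the assertion that a Zariski open dense set of matrices admits the decomposition \eqref{eq:toep}.
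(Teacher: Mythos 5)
The paper itself does not prove this theorem; it is stated as a known black-box result with a citation to \cite{Taylor}, and only its conclusion is invoked in the sequel. Your proposal supplies what the paper delegates to a reference, and it is correct. The deduction in your second paragraph is the standard one: constructibility gives $f(X)=\bigcup_{i}(U_i\cap Z_i)$, taking closures and using $\overline{f(X)}=Y$ yields $Y=\bigcup_i Z_i$, irreducibility of $Y$ forces some $Z_{i_0}=Y$, and then $U_{i_0}=U_{i_0}\cap Z_{i_0}\subseteq f(X)$ is a nonempty open set that is automatically dense because $Y$ is irreducible. The Noetherian-induction sketch of Chevalley's theorem via affine reduction, choice of a transcendence basis, generic freeness, and the Cohen--Seidenberg theorems is likewise the textbook route and is adequate as a sketch; the one point I would adjust is the attribution to ``going-up'': the step that makes $\operatorname{Spec}\mathbb{C}[X]_g\to\operatorname{Spec}\mathbb{C}[Y]_g[t_1,\dots,t_r]$ surjective for an integral (indeed finite) extension of domains is the lying-over theorem, though the two results are customarily proved together and the distinction is cosmetic here. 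Since the paper uses only the statement and never unpacks it, your practical suggestion --- cite Chevalley and present only the short topological deduction --- is exactly the right level of detail and is fully compatible with the paper's treatment.
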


A property $P$ is said to be
\textit{generic} in an algebraic variety $X$ if the points in $X$ that do not have
property $P$ are contained in a proper subvariety $Z$ of $X$. When we use the
term generic without specifying $X$, it just means that $X = \mathbb{C}^{n}$. Formally, let $Z\subset X$ be the subset consisting of points that do not satisfy $P$. If $\overline{Z}$ is a proper closed subset of $X$, then we say that a point $x\in X-Z$ is a generic point with respect to the property $P$, or just `$x\in X-Z$ is a generic point,' if the property being discussed is understood in context. The following is an elementary but useful fact regarding generic points.
\begin{lemma}\label{lem:generic}
Let $f:X\to Y$ be a morphism of algebraic varieties where $\dim(X) \ge \dim(Y)$. If there is a point $x\in X$ such that $df_x$, the differential at $x$, has the maximal rank $\dim(Y)$, then $d f_{x'}$ will also have the maximal rank $\dim(Y)$ for any generic point $x'\in X$. 
\end{lemma}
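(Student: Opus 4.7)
My plan is to identify the locus of non-maximal-rank points as a Zariski closed subset of $X$ and then invoke irreducibility to conclude that it is proper. Concretely, define
\[
Z = \{x' \in X : \operatorname{rank}(df_{x'}) < \dim(Y)\}.
\]
If I can show that $Z$ is closed in $X$, then since $x \notin Z$ by hypothesis and $X$ is irreducible, $Z$ must be a proper closed subset of $X$, and every point $x' \in X \setminus Z$ is a generic point at which $df_{x'}$ has maximal rank $\dim(Y)$.

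To show $Z$ is closed I would work in local affine coordinates. Choose an affine open $U \subset X$ containing $x$ and an affine open $V \subset Y$ with $f(U) \subset V$, embedded as closed subvarieties $U \hookrightarrow \mathbb{C}^N$ and $V \hookrightarrow \mathbb{C}^M$. Then $f|_U$ is the restriction of a polynomial map $F : \mathbb{C}^N \to \mathbb{C}^M$, and its Jacobian $DF(x')$ is a matrix of regular functions on $U$. At each $x' \in U$, the Zariski tangent space $T_{x'}X \subset \mathbb{C}^N$ is the kernel of the Jacobian of the defining ideal of $U$, similarly for $T_{f(x')}Y \subset \mathbb{C}^M$, and $df_{x'}$ is the restriction of $DF(x')$ to $T_{x'}X$. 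The condition $\operatorname{rank}(df_{x'}) < \dim(Y)$ is then the simultaneous vanishing of all $\dim(Y)\times\dim(Y)$ minors of a matrix whose entries are regular functions of $x'$, and hence cuts out a Zariski closed subset of $U$. Patching this over an affine cover of $X$ shows that $Z$ is closed in $X$.

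The main technical subtlety is that $T_{x'}X$ can have varying dimension at singular points of $X$, which obstructs a uniform global matrix presentation. I would address this by first restricting to the smooth locus $X^{\mathrm{sm}}$, an open dense subvariety on which the tangent sheaf is locally free of rank $\dim(X)$, so that the minor argument applies transparently and $Z \cap X^{\mathrm{sm}}$ is closed in $X^{\mathrm{sm}}$. If $x \in X^{\mathrm{sm}}$, then $Z \cap X^{\mathrm{sm}}$ is already proper in $X^{\mathrm{sm}}$ and we are done, since the singular locus $X \setminus X^{\mathrm{sm}}$ is itself a proper closed subset of $X$. Otherwise $x$ is singular, in which case the semicontinuity of rank for the ambient Jacobian $DF$ on $\mathbb{C}^N$ together with density of $X^{\mathrm{sm}}$ produces a nearby smooth point at which $df$ still has rank $\dim(Y)$, reducing to the previous case. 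In every scenario, $Z$ is contained in a proper Zariski closed subset of $X$, which is exactly what is required.
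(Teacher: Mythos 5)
Your proof is correct and takes essentially the same approach as the paper: the locus where $df$ drops rank is cut out by the vanishing of $\dim(Y)\times\dim(Y)$ Jacobian minors, which are polynomial conditions and hence define a Zariski-closed set, and irreducibility of $X$ together with the exhibited full-rank point $x$ forces that set to be a proper closed subset. The paper's own proof is a two-line remark invoking ``the Jacobian determinant''; your version simply supplies the care about rectangular Jacobians and the singular locus that the paper leaves implicit (and which is in any case immaterial in the paper's application, where $X$ and $Y$ are affine spaces).
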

\begin{proof}
It is obvious that the rank of $d f_x$ is of the full rank if and only if the Jacobian determinant of $f$ is nonzero at the point $x$. Since the Jacobian determinant of $f$ at $x$ is a polynomial, this means for a generic point $x'\in X$, $d f_{x'}$ is also of the full rank $\dim( Y)$.
\end{proof}

All notions in this section apply verbatim to the space of $n\times n$ matrices $\mathbb{C}^{n \times n}$
by simply regarding it as $\mathbb{C}^{n^{2}}$, or, to be pedantic,
$\mathbb{C}^{n \times n} \simeq\mathbb{C}^{n^{2}}$. Note that
$\mathbb{C}^{n \times n}$ is an algebraic variety of
dimension $n^{2}$ and matrix multiplication $\mathbb{C}^{n \times n}
\times\mathbb{C}^{n \times n} \to\mathbb{C}^{n \times n}$, $(A,B) \mapsto AB$,
is a morphism of algebraic varieties $\mathbb{C}^{2n^{2}}$ and
$\mathbb{C}^{n^{2}}$.

\section{Toeplitz decomposition of generic matrices}\label{sec:toep}

Let $\operatorname{Toep}_{n}(\mathbb{C})$ be the set of all $n\times n$
Toeplitz matrices with entries in $\mathbb{C}$, i.e., the subset of
$A=(a_{i,j})_{i,j=1}^{n}\in\mathbb{C}^{n\times n}$ defined by equations
\[
a_{i,i+r}=a_{j,j+r},
\]
where $-n+1\leq r\leq n-1$ and $1\leq r+i,r+j,i,j\leq n$. Note that
$\operatorname{Toep}_{n}(\mathbb{C})\simeq\mathbb{C}^{2n-1}$ and that
$\operatorname{Toep}_{n}(\mathbb{C})$ is a subvariety of $\mathbb{C}^{n\times
n}$. In fact, it is a \textit{linear algebraic variety} defined by linear polynomials.

$\operatorname{Toep}_{n}(\mathbb{C})$, being a linear subspace of
$\mathbb{C}^{n\times n}$, has a natural basis  $B_{k}:=(\delta_{i,j+k})_{i,j=1}^{n}$, 
$k= -n+1,-n+2,\dots,n-1$, i.e., 
\[
B_{1}=\left[
\begin{smallmatrix}
0 & 1 & 0 &  & \\
& 0 & 1 & \smash{\ddots} & \\
&  & 0 & \smash{\ddots}  & 0\\
&  &  & \smash{\ddots} & 1\\
&  &  &  & 0
\end{smallmatrix} \right],
B_{2}=\left[
\begin{smallmatrix}
0 & 0 & 1 &  & \\
& 0 & 0 & \smash{\ddots} & \\
&  & 0 & \smash{\ddots}  & 1\\
&  &  & \smash{\ddots} & 0\\
&  &  &  & 0
\end{smallmatrix} \right],
\dots,B_{n-1}=\left[
\begin{smallmatrix}
0 & 0 & 0 &  &1 \\
& 0 & 0 & \smash{\ddots} & \\
&  & 0 & \smash{\ddots}  & 0\\
&  &  & \smash{\ddots} & 0\\
&  &  &  & 0
\end{smallmatrix}\right].
\]
Note that $B_{-k}=B_{k}^{\mathsf{T}}$ and $B_{0}=I$. A Toeplitz matrix $T$ may thus be expressed as
\[
T = \sum\nolimits_{j=-n+1}^{n-1} t_{j}B_{j}.
\]

Let $A=(a_{s,t})\in\mathbb{C}^{n\times n}$ be arbitrary. Suppose $j$ is a
positive integer such that $j\leq n-1$. Then it is easy to see the effect of
left- and right-multiplications of $A$ by $B_{j}$:
\[
B_{j}A =
\left[\begin{smallmatrix}
a_{j+1,1} & a_{j+1,2} & \cdots & a_{j+1,n}\\
a_{j+2,1} & a_{j+2,2} & \cdots & a_{j+2,n}\\[-0.75ex]
\vdots & \vdots  & \ddots & \vdots \\
a_{n,1} & a_{n,2} & \cdots & a_{n,n}\\
0 & 0 & \cdots & 0\\[-0.75ex]
\vdots & \vdots  & \ddots & \vdots \\[0.25ex]
0 & 0 & \cdots & 0
\end{smallmatrix} \right],\\
AB_{j}=
\left[\begin{smallmatrix}
0 & \cdots & 0 & a_{1,1} & \cdots & a_{1,n-j}\\
0 & \cdots & 0 & a_{2,1} & \cdots & a_{2,n-j}\\
0 & \cdots & 0 & a_{3,1} & \cdots & a_{3,n-j}\\
0 & \cdots & 0 & a_{4,1} & \cdots & a_{4,n-j}\\
&\ddots & \vdots & \vdots & & \vdots\\
0 & \cdots & 0 & a_{n,1} & \cdots & a_{n,n-j}
\end{smallmatrix}\right]
.
\]
Multiplying by $B_{j}$ on the left has the effect of shifting a matrix up
(if $j$ is positive) or down (if $j$ is negative) by $\lvert j\rvert$ rows,
whereas multiplying by $B_{j}$ on the right has the effect of shifting a matrix to right (if
$j$ is positive) or to left (if $j$ is negative) by $\lvert j\rvert$ columns.

We will denote $r$-tuples of $n \times n$ Toeplitz matrices by
\[
\operatorname{Toep}_n^r(\mathbb{C}) = \underset{r\text{ copies}}{\underbrace{\operatorname{Toep}_{n}(\mathbb{C})\times\dots\times
\operatorname{Toep}_{n}(\mathbb{C})}}.
\]
This is an algebraic variety in $\mathbb{C}^{rn^2}$ (endowed 
with the Zariski topology) under the subspace topology.

\begin{theorem}
\label{thm:main} Let $\rho_r:\operatorname{Toep}_n^r(\mathbb{C})\rightarrow\mathbb{C}^{n\times n}$ be the map defined by 
$\rho_r(T_{n-r},\dots,T_{n-1})=T_{n-r}\cdots T_{n-1}$. When $r\ge\lfloor
n/2\rfloor+1$, for a generic point $\tau =(T_{n-r},\dots,T_{n-1}) \in
\operatorname{Toep}_n^r(\mathbb{C})$, the differential of $\rho_r$ at
$\tau$ is of full rank $n^{2}$. Therefore for a generic $A \in \mathbb{C}^{n \times n}$, there exists
$r=\lfloor n/2\rfloor+1$ Toeplitz matrices $T_{1},\dots,T_{r}$ such that
$A=T_{1}\cdots T_{r}$.
\end{theorem}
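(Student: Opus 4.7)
The plan is to reduce the theorem to Lemma~\ref{lem:generic} and Theorem~\ref{open mapping theorem}: once the differential of $\rho_r$ is shown to have full rank $n^2$ at a single explicit tuple $\tau^\ast \in \operatorname{Toep}_n^r(\mathbb{C})$, Lemma~\ref{lem:generic} propagates this to a generic $\tau$, so $\rho_r$ becomes a dominant morphism into $\mathbb{C}^{n\times n}$ and Theorem~\ref{open mapping theorem} then gives that $\rho_r\bigl(\operatorname{Toep}_n^r(\mathbb{C})\bigr)$ contains a Zariski open dense subset of $\mathbb{C}^{n\times n}$; that is exactly the second assertion of the theorem. The dimension hypothesis of Lemma~\ref{lem:generic} holds, since a short check shows $r(2n-1) \ge n^2$ precisely when $r \ge \lfloor n/2\rfloor + 1$.

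Computing the differential by a direct Leibniz-type perturbation at $\tau = (T_{n-r},\dots,T_{n-1})$ yields
\[
(d\rho_r)_\tau(S_{n-r},\dots,S_{n-1}) \;=\; \sum_{i=n-r}^{n-1} L_i\,S_i\,R_i, \qquad L_i := T_{n-r}\cdots T_{i-1},\; R_i := T_{i+1}\cdots T_{n-1},
\]
with the convention that empty products equal $I$. So the image of $(d\rho_r)_\tau$ is the sum $\sum_i L_i\cdot \operatorname{Toep}_n(\mathbb{C})\cdot R_i$, and the theorem reduces to producing $T^\ast_{n-r},\dots,T^\ast_{n-1}$ whose associated sandwich subspaces together span all of $\mathbb{C}^{n\times n}$. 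My plan is to build each $T^\ast_i$ out of the shift basis $\{B_j\}$ introduced at the start of Section~\ref{sec:toep}, arranging things so that $L_i$ and $R_i$ reduce to specific shift matrices $B_{a_i}$ and $B_{b_i}$; by the displacement description of left and right multiplication by $B_j$ recalled there, each summand $B_{a_i}\cdot \operatorname{Toep}_n(\mathbb{C})\cdot B_{b_i}$ is then a translated copy of the Toeplitz pattern placed at a prescribed offset inside the $n\times n$ grid, and the shifts $(a_i,b_i)$ are to be chosen so that these translated Toeplitz blocks collectively exhaust all $n^2$ entries.

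The hardest step will be this cover-all-entries verification, because the parameter count is essentially tight: the domain has dimension $r(2n-1)$, exceeding $n^2$ by only $O(n)$ when $r = \lfloor n/2\rfloor + 1$, so the offset pattern $(a_i,b_i)$ must be arranged precisely enough that the $r$ displaced Toeplitz subspaces do not overlap beyond the unavoidable amount. An additional subtlety is that pure shift products collapse via $B_aB_b = B_{a+b}$, which vanishes once $a+b \ge n$, so the $T^\ast_i$ will in general need nontrivial $I$-components to keep the required $L_i$'s and $R_i$'s from becoming zero and killing an entire $S_i$-direction. Once the rank of the differential at such an explicit $\tau^\ast$ is verified to equal $n^2$ --- either by an explicit Jacobian computation or by a combinatorial independence argument on the shifted Toeplitz subspaces --- Lemma~\ref{lem:generic} and Theorem~\ref{open mapping theorem} close the argument as described.
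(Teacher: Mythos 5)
Your overall strategy --- exhibit a single tuple $\tau^\ast$ where the differential has full rank $n^2$, then invoke Lemma~\ref{lem:generic} to spread this to generic $\tau$ and Theorem~\ref{open mapping theorem} to conclude dominance --- is exactly the paper's frame, and your formula $(d\rho_r)_\tau(S_{n-r},\dots,S_{n-1}) = \sum_i L_i S_i R_i$ is correct. The gap is in the construction of $\tau^\ast$: the ``pure shift'' plan, in which $L_i = B_{a_i}$ and $R_i = B_{b_i}$ and each sandwich $B_{a_i}\operatorname{Toep}_n(\mathbb{C})B_{b_i}$ is a translated copy of the Toeplitz pattern, cannot work. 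Each such sandwich has dimension exactly $2n-1-|a_i|-|b_i|$, so the image dimension is at most $\sum_i\bigl(2n-1-|a_i|-|b_i|\bigr)$, and moreover distinct sandwiches have forced overlaps whenever their supports overlap on a common diagonal. Already at $n=3$, $r=2$ one can check directly that no choice of pure shift factors gives rank $9$: one needs $|a|+|b|\le 1$ just to avoid breaking the dimension cap $10-|a|-|b|\ge 9$, and for $|a|+|b|=1$ (say $L_1=B_1$, $R_0=I$) the two sandwiches $\operatorname{Toep}_3$ and $B_1\operatorname{Toep}_3$ intersect in the $2$-dimensional space of strictly upper-triangular Toeplitz matrices, capping the rank at $7<9$.

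You do flag that the $T^\ast_i$ will ``in general need nontrivial $I$-components,'' and that is indeed the fix the paper uses: take $T_{n-i}=B_0+t_{n-i}\bigl(B_{n-i}-B_{-(n-i)}\bigr)$ with the $t$'s as formal indeterminates. But once an identity component is present, the operators $L_i$ and $R_i$ stop being single shift matrices, and the translated-block picture you rely on no longer describes the image. The real work of the paper's proof is precisely how to handle this: treat the $n^2\times r(2n-1)$ Jacobian $M$ as a matrix whose entries are polynomials in the $t$'s, show every $n^2\times n^2$ minor has $t$-degree at least $(n-1)^2$ (because the $t$-free entries are the $1$'s appearing exactly once per diagonal index, so any minor collecting more than $2n-1$ of them must vanish), and then locate one minor whose monomial of degree exactly $(n-1)^2$ --- after a linear change of the $x$'s to $y$'s --- arises in a unique way and therefore has nonzero coefficient. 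That degree-counting step and the uniqueness of the extremal monomial are the content of Lemmas~3.3 and 3.4 in the paper, and they are not a routine ``cover-all-entries'' verification; your sketch stops short exactly where this argument begins. So: correct reduction, correct differential formula, but the concrete construction of $\tau^\ast$ and the rank verification --- the mathematically substantive part --- are missing, and the particular route you propose (block-covering by shifted Toeplitz patterns) provably fails for small $n$.
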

To prove this theorem, we first fix some notations. Let $r=\lfloor
n/2\rfloor+1$ and
denote the Toeplitz matrix occuring in the $i$th argument of $\rho_r$ by
\[
X_{n-i} := \sum\nolimits_{j=-n+1}^{n-1}x_{n-i,j}B_{j}, \quad i =1,\dots,r.
\]
The differential of $\rho_r$
at a point $\tau =(T_{n-r},\dots,T_{n-1})\in\operatorname{Toep}_n^r(\mathbb{C})$
is the linear map $(d\rho_r)_{\tau}:\operatorname{Toep}_n^r(\mathbb{C})\rightarrow\mathbb{C}^{n\times n}$,
\[
(d\rho_r)_{\tau}(X_{n-r},\dots,X_{n-1})=
\sum\nolimits_{i=1}^{r}T_{n-r}\cdots
T_{n-i-1}X_{n-i}T_{n-i+1}\cdots T_{n-1},
\]
where $X_{n-i} \in \operatorname{Toep}_{n}(\mathbb{C})$, $i =1,\dots, r$. For any given $\tau$, observe that
$(d\rho_r)_{\tau}(X_{n-r},\dots,X_{n-1})$ is an $n\times n$ matrix with entries that are linear forms in the $x_{n-i,j}$'s.
Let $L_{p,q}$ be the linear form in the $(p,q)$-th entry of this matrix. 
The statement of the theorem says that we can find a point $\tau \in\operatorname{Toep}_n^r(\mathbb{C})$ so that these linear forms are linearly independent.
For any given $\tau$, since $(d\rho_r)_{\tau}$ is a linear map from the $r(2n-1)$-dimensional $\operatorname{Toep}_n^r(\mathbb{C})$
to the $n^2$-dimensional $\mathbb{C}^{n \times n}$, we may also regard it  as an $n^{2}\times(2n-1)r$
matrix $M$.  Hence our goal is to find a point $\tau$ so that this rectangular matrix $M$ has
full rank $n^{2}$, or equivalently, $M$ has a nonzero $n^{2}\times
n^{2}$ minor.

The idea of the proof of Theorem~\ref{thm:main} is that we explicitly find such a point $\tau =(T_{n-r},\dots, T_{n-1})$, where the differential $(d\rho_r)_{\tau}$ of $\rho_r$ at $\tau$ is surjective. This implies that the differential of $\rho_r$ at a generic point is surjective, allowing us to conclude that $\rho_r$ is dominant. We then apply Theorem~\ref{open mapping theorem} to deduce that the image of $\rho_r$ contains an open dense subset of $\mathbb{C}^{n\times n}$.

As will be clear later, our choice of $\tau =(T_{n-r},\dots, T_{n-1})$ will take the form
\begin{equation}\label{eq:tau}
T_{n-i} :=B_{0}+t_{n-i}(B_{n-i}-B_{-n+i}), \quad  i=1,\dots,r,
\end{equation}
where $t_{n-i}$'s are indeterminates. We will start by computing
\[
Y_{n-i}:=T_{n-r}\cdots T_{n-i-1}X_{n-i}T_{n-i+1}\cdots T_{n-1}.
\]
To avoid clutter in the subsequent discussions,
we adopt the following abbreviation: When we write $x$'s we will mean terms involving $x_{n-i,j}$, $i=1,\dots,r$, $j=-n+1,\dots, n-1$,
and when we write $t$'s we will mean terms involving $t_{n-i}$, $i=1,\dots,r$. This convention will also apply to other lists of variables.

\begin{lemma}\label{linear term}
For $\tau =(T_{n-r},\dots, T_{n-1})$ as in \eqref{eq:tau},  we have
\begin{multline*}
Y_{n-i}=X_{n-i}+\bigl[ t_{n-r}(B_{n-r}-B_{-(n-r)})+\cdots
+t_{n-i-1}(B_{n-i-1}-B_{-(n-i-1)})\bigr] X_{n-i}\\
+X_{n-i} \bigl[ t_{n-i+1}(B_{n-i+1}-B_{-(n-i+1)})
+\cdots+t_{n-1}(B_{n-1}-B_{-(n-1)}) \bigr] + \Omega(t^{2}),
\end{multline*}
where $\Omega(t^{2})$ means terms of degrees at least two in $t$'s.
\end{lemma}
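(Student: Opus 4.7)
The plan is to carry out a direct first-order expansion of the product, exploiting the fact that each factor $T_{n-k}=B_0+t_{n-k}(B_{n-k}-B_{-(n-k)})=I+t_{n-k}(B_{n-k}-B_{-(n-k)})$ is affine in the single indeterminate $t_{n-k}$, while $X_{n-i}$ depends only on the $x$'s and hence contributes nothing of positive degree in the $t$'s. So the whole exercise is collecting the constant-in-$t$ piece and the degree-one-in-$t$ piece in
\[
Y_{n-i} = (T_{n-r}\cdots T_{n-i-1})\, X_{n-i}\, (T_{n-i+1}\cdots T_{n-1}).
\]

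First I would expand the left product by distributivity. Each expanded monomial is obtained by choosing, from every factor $T_{n-k}$ with $k=i+1,\dots,r$, either $I$ or the linear term $t_{n-k}(B_{n-k}-B_{-(n-k)})$. Selecting $I$ from every factor gives $I$; selecting the linear term from exactly one factor (and $I$ from all the others) gives $t_{n-k}(B_{n-k}-B_{-(n-k)})$, since the surrounding $I$'s leave it unchanged; and selecting the linear term from two or more factors produces a summand of degree at least two in the $t$'s. Hence
\[
T_{n-r}\cdots T_{n-i-1} = I + \sum\nolimits_{k=i+1}^{r} t_{n-k}\bigl(B_{n-k}-B_{-(n-k)}\bigr) + \Omega(t^2),
\]
and an identical computation with $k=1,\dots,i-1$ gives
\[
T_{n-i+1}\cdots T_{n-1} = I + \sum\nolimits_{k=1}^{i-1} t_{n-k}\bigl(B_{n-k}-B_{-(n-k)}\bigr) + \Omega(t^2).
\]

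Next I would substitute these two expansions into the expression for $Y_{n-i}$ and multiply out. Writing $L$ and $R$ for the respective linear-in-$t$ summations above, we get $Y_{n-i}=(I+L+\Omega(t^2))\,X_{n-i}\,(I+R+\Omega(t^2))$. The constant-in-$t$ contribution is $I\cdot X_{n-i}\cdot I = X_{n-i}$; the two degree-one-in-$t$ contributions are $L\cdot X_{n-i}\cdot I = L\,X_{n-i}$ and $I\cdot X_{n-i}\cdot R = X_{n-i}\,R$; and every remaining cross-term uses at least two linear pieces and hence lies in $\Omega(t^2)$. Reading these three pieces off yields precisely the formula stated in the lemma.

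There is no real obstacle here; the argument is pure bookkeeping in a polynomial ring. The only thing to watch is the ordering of the factors in the two products, but because each linear-in-$t$ summand is flanked by identity matrices (after the constant expansion), order does not affect the degree-one contribution, and the claimed symmetric description with the $t_{n-k}(B_{n-k}-B_{-(n-k)})$ terms appearing on the left of $X_{n-i}$ for $k\ge i+1$ and on the right of $X_{n-i}$ for $k\le i-1$ comes out automatically.
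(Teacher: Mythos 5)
Your proof is correct, and it is precisely the direct first-order expansion that the paper has in mind (the paper in fact states Lemma~\ref{linear term} without a written proof, treating the expansion as immediate). Your bookkeeping — writing $T_{n-k}=I+t_{n-k}(B_{n-k}-B_{-(n-k)})$, expanding each of the two side products to first order in the $t$'s, and collecting the constant, the $L\,X_{n-i}$, and the $X_{n-i}\,R$ terms — is exactly the intended argument, and your remark that each linear-in-$t$ summand is flanked by identities (so that ordering does not affect the degree-one piece) disposes of the only subtlety cleanly.
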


By our choice of $T_{n-j}$'s, $L_{p,q}$ is a linear form in $x$'s with
coefficients that are polynomials in $t$'s. Note that $L_{p,q}$ has the form:
\[
L_{p,q}=\sum\nolimits_{i=1}^{r}x_{n-i,q-p}+ \Omega(t)
\]
where $\Omega(t)$ denotes terms of degrees at least one in $t$'s.

By our choice of $T_{n-j}$'s, entries of the coefficient matrix $M$ are also polynomials in $t$'s,
which implies that any $n^{2}\times n^{2}$ minor of $M$ is a polynomial in $t$'s. Furthermore,
observe that the constant entries (i.e., entries without $t$'s) in $M$ are all $1$'s. Let us
examine the coefficient of the lowest degree term of these minors.

\begin{lemma}
For $\tau =(T_{n-r},\dots, T_{n-1})$ as in \eqref{eq:tau}, any $n^{2}\times n^{2}$ minor $P$ of $M$ is a polynomial in $t$'s of degree at least
$(n-1)^{2}$.
\end{lemma}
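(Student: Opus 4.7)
The plan is to exploit the fact that at $t=0$ the coefficient matrix $M$ collapses to rank at most $2n-1$, and then to invoke a general Laplace-expansion estimate showing that any $k\times k$ minor of a polynomial matrix vanishes at the origin to order at least $k - \operatorname{rank} M(0)$. Writing $\mathfrak{m} := (t_{n-r},\dots,t_{n-1})$ for the maximal ideal at the origin, the claim becomes that every $n^2\times n^2$ minor of $M$ lies in $\mathfrak{m}^{(n-1)^2}$.

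First I would read off from the formula $L_{p,q} = \sum_{i=1}^{r} x_{n-i,\,q-p} + \Omega(t)$ that the entry of $M$ in row $(p,q)$, column $(i,j)$ has constant-in-$t$ term equal to $\delta_{j,\,q-p}$, and is therefore independent of $i$. Hence, setting $M_0 := M|_{t=0}$, for each fixed $j \in \{-(n-1),\dots,n-1\}$ the $r$ columns of $M_0$ indexed by $(1,j),\dots,(r,j)$ coincide, so $\operatorname{rank} M_0 \le 2n-1$.

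Next I would establish the general fact: for any square matrix $A(t) \in \mathbb{C}[t_{n-r},\dots,t_{n-1}]^{k\times k}$ with $\operatorname{rank} A(0) = s$, one has $\det A(t) \in \mathfrak{m}^{k-s}$. The proof is a short Laplace expansion: choose constant invertible $P,Q$ with $P A(0) Q = \operatorname{diag}(I_s, 0)$; the lower $(k-s)$ rows of $P A(t) Q$ then have all entries in $\mathfrak{m}$, so expanding $\det(PAQ)$ along these rows expresses it as a sum in which each summand contains a $(k-s)\times(k-s)$ minor whose entries all lie in $\mathfrak{m}$, and so itself lies in $\mathfrak{m}^{k-s}$. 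Since $\det A$ and $\det(PAQ)$ differ by a nonzero constant, $\det A \in \mathfrak{m}^{k-s}$.

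Applying this with $k = n^2$ to any $n^2 \times n^2$ submatrix $M'$ of $M$: since $\operatorname{rank} M'(0) \le \operatorname{rank} M_0 \le 2n-1$, the lemma gives $\det M' \in \mathfrak{m}^{n^2 - (2n-1)} = \mathfrak{m}^{(n-1)^2}$, which is precisely the desired degree bound on $P$. The argument is mechanical once the first step is noticed; the only genuine content is the observation that the constant parts of the columns of $M$ depend only on $j = q-p$ and not on $i$, and there is no significant obstacle beyond this.
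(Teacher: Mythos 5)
Your proof is correct and rests, at bottom, on the same combinatorial fact the paper exploits --- that the specialization $M_0 := M|_{t=0}$ has at most $2n-1$ linearly independent columns, because the constant-in-$t$ part of the $(p,q),(i,j)$ entry of $M$ is $\delta_{j,\,q-p}$ and hence does not depend on $i$ --- but you package the deduction more systematically, via a different key lemma. The paper argues directly inside a Leibniz-type expansion of the minor: a monomial of degree $<(n-1)^2$ must use at least $2n$ constant $1$'s; by pigeonhole on the $2n-1$ values of $j=q-p$, two of those $1$'s then lie in columns sharing a $j$; and the paper concludes that ``terms arising this way must be zero,'' which is, implicitly, the statement that the corresponding submatrix of $M_0$ has two equal columns so the contribution grouped by row/column subsets vanishes. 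You instead isolate the invariant $\operatorname{rank} M_0\le 2n-1$ and prove once, as a standalone fact, that a $k\times k$ polynomial matrix whose evaluation at the origin has rank $s$ has determinant lying in $\mathfrak{m}^{k-s}$, using a constant change of basis reducing $A(0)$ to $\operatorname{diag}(I_s,0)$ and a Laplace expansion along the $k-s$ rows whose entries then all lie in $\mathfrak{m}$. The payoff of your route is a modular, genuinely gap-free argument that makes explicit the rank-defect reasoning the paper's terse ``must be zero'' leaves to the reader; the cost is the small extra apparatus of $P$, $Q$, and the ideal $\mathfrak{m}$. Both routes are sound, and the general order-of-vanishing lemma you state is reusable.
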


\begin{proof}
Let $d\leq(n-1)^{2}-1$ be a positive integer. It suffices to show that any
term of degree $d$ in $P$ is zero. To see this note that the minor $P$ is
the determinant of a submatrix obtained from choosing $n^{2}$ columns of $M$. Hence
terms of degree $d<(n-1)^{2}$ come from taking at least $2n$ $1$'s in $M$,
otherwise the degree would be larger than or equal to $(n-1)^{2}$. If
we take $2n$ $1$'s from $M$, then there exist $(p,q)$ and $(p^{\prime
},q^{\prime})$ such that $q-p=q^{\prime}-p^{\prime}$ with two of the $1$'s coming
from $L_{p,q}$ and $L_{p^{\prime},q^{\prime}}$. But terms arising this way
must be zero because of the discussion above. 
\end{proof}
To illustrate the proof, we consider the case $n=3$ and thus $r=\lfloor 3/2 \rfloor + 1 = 2$. In this case,
\[
L_{p,q}=x_{1,q-p}+x_{2,q-p}+\Omega(t), \quad p,q = 1,2,3.
\] 
The $9\times 10$ coefficient matrix $M$ takes the form
\[\arraycolsep=2.5pt
\kbordermatrix{
& x_{1,-2} & x_{2,-2} & x_{1,-1} & x_{2,-1} & x_{1,0} &x_{2,0} & x_{1,1} &x_{2,1}  & x_{1,2} &x_{2,2}\\
L_{1,1} & * & * & * & *&  1& 1& * &* &* &* \\
L_{1,2} & * & * & * & *& * & *& 1 & 1& *&*  \\
L_{1,3} & *&*&*&*&*&*&*&*&1&1\\
L_{2,1} & *&*&1&1&*&*&*&*&*&*\\
L_{2,2} & *&*&*&*&1&1&*&*&*&*\\
L_{2,3} & *&*&*&*&*&*&1&1&*&*\\
L_{3,1} & 1&1&*&*&*&*&*&*&*&*\\
L_{3,2} & *&*&1&1&*&*&*&*&*&*\\
L_{3,3} & *&*&*&*&1&1&*&*&*&*}
\]
where the rows correspond to the $L_{p,q}$'s and the columns correspond to the $x_{n-i,j}$'s. We have marked the locations of the $1$'s and used $*$ to denote entries of the form $\Omega(t)$. It is easy to see in this case that if we take a $9\times 9$ minor of $M$, the degree in $t$'s of this minor will be at least four. Indeed, it is also not hard to see that there exists a minor of degree exactly four --- this is the content of our next lemma.

Since a linear change of variables does not change the rank of a matrix, to
simplify our calculations, we will change our $x$'s to $y$'s linearly as follows:
\begin{align*}
y_{j}  &  =x_{n-r,j}+\dots+ x_{n-1,j},\\
y_{n-1,j}  &  =x_{n-r,j}+\dots+x_{n-2,j},\\
& \; \vdots\\
y_{n-(r-1),j}  &  =x_{n-r,j},
\end{align*}
for each $-(n-1)\le j \le n-1$.

\begin{lemma}\label{lem:exact}
For $\tau =(T_{n-r},\dots, T_{n-1})$ as in \eqref{eq:tau}, there exists an $n^{2}\times n^{2}$ minor $P$ of $M$ that contains a monomial term of degree exactly $(n-1)^{2}$ in $t$'s and whose cofficient is non-zero. It follows that $\operatorname{rank}(M) = n^2$ for this particular choice of $\tau$.
\end{lemma}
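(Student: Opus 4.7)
The plan is to work in the $y$-coordinates introduced just before the lemma. In these coordinates the constant part of each row $L_{p,q}$ of $M$ collapses to $y_{q-p}$: a single $1$ sits in the column $y_{q-p}$ and zeros fill all other $y$- and $y_{n-i,j}$-columns, while the entries in the $(r-1)(2n-1)$ columns of type $y_{n-i,j}$ are polynomials of degree at least one in the $t$'s. I would then form the desired minor $P$ by selecting all $2n-1$ columns $y_j$ together with a carefully chosen set of $(n-1)^2$ columns from among the $y_{n-i,j}$'s, discarding the remaining $(r-1)(2n-1)-(n-1)^2$ of them.

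Next I would partition the $n^2$ rows into shift classes indexed by $s:=q-p\in\{-(n-1),\ldots,n-1\}$; the class of $s$ contains the $n-|s|$ rows whose unique constant $1$ sits in column $y_s$. Any permutation of rows to chosen columns contributing a term of the minimum possible degree $(n-1)^2$ to $\det P$ must match one row per shift class to column $y_s$ (consuming all $2n-1$ ones) and then match each of the $(n-1)^2$ remaining surplus rows to a chosen $y_{n-i,j}$-column via an entry that is \emph{linear} in the $t$'s. From Lemma~\ref{linear term}, combined with the telescoping substitution $x_{n-i,j}=y_{n-i+1,j}-y_{n-i,j}$ (and $x_{n-r,j}=y_{n-r+1,j}$), the linear-in-$t$ coefficient of $y_{n-i,j}$ in $L_{p,q}$ is nonzero only when $|(q-p)-j|\in\{n-i-1,\,n-i\}$, and in each of those two cases it reduces to a single $\pm t_{n-k}$ (with $n-k=|(q-p)-j|$) whose sign and whose support conditions on $p,q$ are directly readable off Lemma~\ref{linear term}.

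I would then exhibit an explicit bijection from the $(n-1)^2$ surplus rows onto the $(n-1)^2$ chosen $y_{n-i,j}$-columns so that every matched entry is one of these single $\pm t_{n-k}$ terms, yielding a single monomial $\pm\prod_k t_{n-k}^{a_k}$ of total degree $(n-1)^2$. Concretely, for each shift $s$ I would allocate the $n-|s|-1$ surplus rows of that class to nearby columns $y_{n-i,j}$ with $|s-j|\in\{n-i-1,n-i\}$, cycling through the admissible $(i,j)$ pairs so that every matched entry lands in the one-term regime and the support conditions hold; for small $n$ (e.g.\ the $n=3$ example just discussed), this yields the monomial $\pm t_{n-r}^{(n-1)^2}$ directly.

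The principal obstacle will be to verify that the coefficient of this specific monomial in $\det P$ is nonzero, i.e., that the contributions of competing bijections do not cancel it. I expect this to follow from a rigidity argument: once the one-per-shift-class pairing on the $y_j$-columns is fixed, the constraint that every surplus match be linear in $t$'s, together with the narrow two-diagonal support $|(q-p)-j|\in\{n-i-1,n-i\}$, leaves essentially a unique admissible bijection targeting the chosen monomial, so its coefficient in $\det P$ is a nonzero product of $\pm 1$'s. Once some $n^2\times n^2$ minor $P$ of $M$ is shown to be a nonzero polynomial in the $t$'s, the stated conclusion $\operatorname{rank}(M)=n^2$ for a suitable choice of $\tau$ of the form \eqref{eq:tau} is immediate.
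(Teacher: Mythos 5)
Your overall strategy — pass to the $y$-coordinates, take all $2n-1$ columns $y_j$ together with $(n-1)^2$ of the $y_{n-i,j}$'s, pair one row from each shift class with its constant $1$, and pair the $(n-1)^2$ surplus rows with linear-in-$t$ entries — is exactly the paper's framework. The gap is in the step you yourself flag as ``the principal obstacle'': you never establish that \emph{some specific} degree-$(n-1)^2$ monomial has a nonzero coefficient. You do not name a target monomial for general $n$ (only ``some $\pm\prod_k t_{n-k}^{a_k}$''), and for the concrete case you suggest ($\pm t_{n-r}^{(n-1)^2}$) the ``rigidity'' claim that there is ``essentially a unique admissible bijection'' is false: even for $n=3$ with the minor obtained by dropping the $y_{2,0}$ column there are three distinct permutations contributing $\pm t_1^4$, and one must check a sign count to see they do not cancel. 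Proving non-cancellation in general with your monomial would require a separate combinatorial argument you have not supplied, and for larger $n$ you would also need to verify that enough distinct $y_{n-i,j}$-columns carry a $\pm t_{n-r}$ entry reachable from the right shift class — which is not obvious from the two-diagonal support condition alone.

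The paper sidesteps the cancellation issue entirely by picking a smarter monomial, and this is the idea your proof is missing. It targets $t_{n-1}^{2n-2}\,t_{n-2}^{2n-2}\cdots t_{n-r+1}^{\,\cdot}$, built greedily from the \emph{highest} index. The crucial observation is that $t_{n-1}$ appears only in the $2n$ linear forms $L_{j,1}$ and $L_{j,n}$; since the $(1,n)$ and $(n,1)$ shift classes are singletons their rows must supply a $1$, leaving exactly $2n-2$ rows and exactly one admissible column for each — so extracting $t_{n-1}^{2n-2}$ is forced. Peeling off $t_{n-2}$ next, the columns where it could contribute in the remaining rows are again forced because the $y$-variables it would multiply have already been consumed by the $t_{n-1}$ step (the paper spells this out for $n=5$). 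Iterating, the contributing assignment is literally unique, so the coefficient is $\pm 1$ with no cancellation to worry about. Your proposal would need either to adopt this greedy ``start from $t_{n-1}$'' monomial or to carry out a genuine sign/non-cancellation analysis, and as written it does neither.
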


\begin{proof}
If we use more than $2n-1$ $1$'s from $M$ to form monomials in $P$, then we must obtain that coefficients of these monomials are zero. Therefore, the only way to obtain a nonzero coefficient for the degree-$(n-1)^{2}$
term is to take exactly $2n-1$ $1$'s and $(n-1)^{2}$ terms involving $t$'s to the first power.
We may thus ignore the $\Omega(t^{2})$ in Lemma~\ref{linear term}. We claim that
there exists a minor of $M$ such that it contains the monomial $t_{n-1}^{2n-2}t_{n-2}^{2n-2}\cdots t_{n-r+2}^{2n-2}t_{n-r+1}^{2r-3}$ if $n$ is even
and $t_{n-1}^{2n-2}t_{n-2}^{2n-2}\cdots t_{n-r+2}^{2n-2}t_{n-r+1}^{2n-2}$ if $n$ is odd.  We will prove the odd case.  The even case can be proved in the same manner.
Let $n=2k+1$. Then $r=\lfloor n/2\rfloor+1=k+1$ and $n-r+1=k+1$. Upon transforming to the
new coordinates $y$'s defined before this lemma, $L_{p,q}$ takes the form:
\begin{multline}
L_{p,q}=y_{q-p}   +(t_{q-1}y_{1-p,q-1}+\cdots)-(t_{n-q}y_{n-p,n-q}+\cdots)\\
 +[t_{n-p}(y_{q-n}-y_{n-p+1,q-n})+\cdots]
-[t_{p-1}(y_{q-1}-y_{p,q-1})+\cdots],\label{formula}
\end{multline}
where we have adopted the convention that $t_{i}:=0$ if it is
not of the form $t_{n-j}$ with $j=1,\dots,r$.
The `$\cdots$' in \eqref{formula} denotes the trailing terms that play no role
in the formation of the required minor $P$. For example, the trailing terms after
$t_{q-1}y_{1-p,q-1}$ are $t_{q-2}y_{2-p,q-2}+t_{q-3}y_{3-p,q-3}+\cdots
+t_{n-r}y_{p-q+r-n,n-r}$.  By \eqref{formula}, we have
to choose exactly one $1$ from the linear forms $L_{1,q-p+1},L_{2,q-p+2},\dots,L_{n,q-p+n}$, where $1\le q-p+j\le n$ and $j=1,\dots, n$. Now it is
obvious that there is only one way to obtain a monomial containing
$t_{n-1}^{2n-2}$, because $t_{n-1}$ only appears in $L_{j,1}$ and $L_{j,n}$
for $j= 1,\dots,n$. By the same reasoning, the monomial containing
$t_{n-1}^{2n-2}t_{n-2}^{2n-2}$ is unique. Continuing this procedure, we arrive at 
the conclusion that the monomial $t_{n-1}^{2n-2}t_{n-2}^{2n-2}\cdots t_{n-r+1}^{2n-2}$
is unique in $P$ and in particular the coefficient of this monomial is not zero. 
\end{proof}

To illustrate the proof of Lemma~\ref{lem:exact}, we work out the case $n=5$ explicitly. In this case, there are $25$ linear forms $L_{i,j}$ where $i,j = 1,\dots, 5$. The coefficients of these linear forms $L_{i,j}$ determine a matrix $M$ of size $25\times  27$. Each $25 \times 25$ minors of $M$ is a polynomial in $t$'s. Our goal is to find a nonzero minor $P$ of $M$ and we achieve this by finding a nonzero monomial in $P$. Since $r=\lfloor 5/2\rfloor + 1 =3$, the monomial we seek is $t_{4}^{8}t_{3}^{8}$. If we backtrack the way we calculate minors of $M$, we would see that to obtain a particular monomial, we need to take one coefficient from each $L_{i,j}$ in an appropriate way.

It is easy to see that $t_{4}$ appears in $L_{j,1}$ and $L_{j,5}$ for $ j=1,\dots,5$. Notice
that we have to take exactly one $1$ from each linear form in $\{L_{p,q}: -4 \le q-p \le 4\}$ and so we are only allowed to take
eight $t_{4}$'s from the ten linear forms $L_{j,1}$ and $L_{j,5}$, $j=1,\dots,5$, because the set  $\{L_{p,q}: q-p = s \}$ contains only one element when $s=-4$ or  $4$. Next, we need
to choose $t_{3}$, and $t_{3}$ appears only in $L_{1,j},L_{2,j},L_{4,j},L_{5,j},L_{j,1},L_{j,2},L_{j,4}$ as well as $L_{j,5}$. Since we have already used
$L_{j,1}$ and $L_{j,5}$ in the previous step, we are only allowed to take
$t_{3}$ from $L_{j,2}, L_{j,4}$ and $L_{1,j},L_{2,j},L_{4,j},L_{5,j}$.
But by \eqref{formula},  the $y$'s in $L_{1,j},L_{2,j},L_{4,j},L_{5,j}$ with coefficients involving $t_{3}$
have also been used in the previous step, compelling us to choose $L_{j,2}, L_{j,4}$.

Again, we have to take $1$'s from each linear form in $\{L_{p,q}: -4 \le q-p \le 4\}$. Therefore we obtain $t_{3}^{8}$. Now there are  five $L_{i,j}$'s left,
they are $L_{j,3}$ for $1\leq j\leq5$ and we have to take $1$ for each
$L_{j,3}$ since we need nine $1$'s. Thus, we obtain $t_{4}^{8}t_{3}^{8}$ in
the unique way.

The following summarizes the procedure explained above. The three tables below are intended to show how we obtain the monomial $t_{4}^{8}t_{3}^{8}$.  The $(i,j)$-th entry of the three tables indicates the term we pick from $L_{i,j}$. For example, the $(1,1)$-th entry of those tables means that we would pick $t_4$ from $L_{1,1}$ and the $(5,3)$-th entry means that we would pick $1$ from $L_{5,3}$. The `$\times$' in the $(1,2)$-th entry of the first table indicate that we have yet to pick an entry from $L_{1,2}$. In case it is not clear, we caution the reader that these tables are neither matrices nor determinants.
\begin{enumerate}
\item We pick eight $t_{4}$'s from $L_{i,1}$ and $L_{j,5}$ where $i=1,\dots,
4$, $j = 2,\dots,5$, and we pick two $1$'s from $L_{5,1}$ and $L_{1,5}$. This yields a factor of $t_4^8$.
{\footnotesize
\[
\begin{matrix}[c|ccccc]
L_{i,j} & 1 & 2 & 3 & 4 & 5 \\\hline
1 & t_{4} & \times & \times & \times & 1\\
2 & t_{4} & \times & \times & \times & t_{4}\\
3 & t_{4} & \times & \times & \times & t_{4}\\
4 & t_{4} & \times & \times & \times & t_{4}\\
5 & 1 & \times & \times & \times & t_{4}
\end{matrix}
\]}%

\item We pick eight $t_{3}$'s from $L_{i,2}$ and $L_{j,4}$ where $i=1,\dots,
4$, $j = 2,\dots,5$,  and we pick two $1$'s from $L_{5,2}$ and $L_{4,5}$. This yields the factor $t_4^8 t_3^8$.
{\footnotesize
\[
\begin{matrix}[c|ccccc]
L_{i,j} & 1 & 2 & 3 & 4 & 5 \\\hline
1 & t_{4} & t_{3} & \times & 1 & 1\\
2 & t_{4} & t_{3} & \times & t_{3} & t_{4}\\
3 & t_{4} & t_{3} & \times & t_{3} & t_{4}\\
4 & t_{4} & t_{3} & \times & t_{3} & t_{4}\\
5 & 1 & 1 & \times & t_{3} & t_{4}
\end{matrix}
\]}%

\item In order to preserve the  $t_4^8t_3^8$ factor obtained above, we pick five $1$'s from $L_{1,3}$, $L_{2,3}$, $L_{3,3}$, $L_{4,3}$, and $L_{5,3}$.
{\footnotesize
\[
\begin{matrix}[c|ccccc]
L_{i,j} & 1 & 2 & 3 & 4 & 5 \\\hline
1 & t_{4} & t_{3} & 1 & 1 & 1\\
2 & t_{4} & t_{3} & 1 & t_{3} & t_{4}\\
3 & t_{4} & t_{3} & 1 & t_{3} & t_{4}\\
4 & t_{4} & t_{3} & 1 & t_{3} & t_{4}\\
5 & 1 & 1 & 1 & t_{3} & t_{4}
\end{matrix}
\]}%
\end{enumerate}

\begin{proof}[Proof of Theorem~\ref{thm:main}:] By Lemma~\ref{lem:exact}, the linear map
$d\rho_{\tau}$ is surjective at the point $\tau = (T_{n-r},\dots,T_{n-1})$ as defined in \eqref{eq:tau}. Hence $d\rho_{\tau}$ is surjective at any generic
point $\tau$ by Lemma~\ref{lem:generic}. If  $\operatorname{im}(\rho)$ is contained in a closed subset
of $\mathbb{C}^{n\times n}$, then we obtain that the rank of $d\rho$ at a generic point has rank less than or equals to $n^2$, which is a contradiction to the fact that $d\rho$ is
surjective at a generic point. By Theorem~\ref{open mapping theorem}, we see that the image of $\rho$ contains an open dense subset of $\mathbb{C}^{n\times n}$. This completes the proof of
Theorem~\ref{thm:main}. 
\end{proof}

Let $X$ be a generic $n\times n$ matrix. Then  Theorem~\ref{thm:main} ensures the existence of
a decomposition into a product of $r=\lfloor n/2 \rfloor+1 $ Toeplitz
matrices.  Note that the decomposition of $X$ is not unique without further
conditions on the Toeplitz factors. An easy way to see this is that $(\alpha_1 T_1)(\alpha_2 T_2) \cdots 
(\alpha_r T_r) = T_1 T_2 \cdots T_r$ as long as $\alpha_1 \alpha_2 \cdots \alpha_r = 1$.
In fact, the preimage $\rho^{-1}(X)$ is the set of
$r$-tuples of Toeplitz matrices $(T_{1},T_{2},\dots, T_{r})$ such that
$T_{1}T_{2}\cdots T_{r}=X$, and this set is an algebraic set of dimension $r(2n-1)-n^{2}$, i.e., $3n/2 -1$ for even $n$ and $(n-1)/2$ for odd $n$.

The generic number of Toeplitz factor $r = \lfloor n/2 \rfloor + 1$ in Theorem~\ref{thm:main} is however sharp.
\begin{corollary}
\label{cor:sharp-t} $r=\lfloor n/2 \rfloor+1$ is the smallest integer such
that every generic $n\times n$ matrix is a product of
$r $ Toeplitz matrices.
\end{corollary}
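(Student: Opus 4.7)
The plan is to argue by a simple dimension count combined with the standard fact that the (Zariski closure of the) image of a morphism of algebraic varieties has dimension at most that of the domain.

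First I would set up the contrapositive: suppose for contradiction that there exists some $r \le \lfloor n/2\rfloor$ such that every generic $n \times n$ matrix is expressible as a product of $r$ Toeplitz matrices. Consider the multiplication morphism
\[
\rho_r:\operatorname{Toep}_n^r(\mathbb{C})\to\mathbb{C}^{n\times n},\qquad (T_1,\dots,T_r)\mapsto T_1\cdots T_r.
\]
The hypothesis means that $\rho_r$ is dominant, so by the general principle that the closure $\overline{\rho_r(\operatorname{Toep}_n^r(\mathbb{C}))}$ is an irreducible subvariety of $\mathbb{C}^{n\times n}$ of dimension at most $\dim\operatorname{Toep}_n^r(\mathbb{C}) = r(2n-1)$, dominance would force $r(2n-1)\ge n^2$.

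Next I would verify the elementary inequality $r(2n-1)<n^2$ whenever $r\le\lfloor n/2\rfloor$, treating the two parities separately. For $n=2k$ even, $r\le k$ gives $r(2n-1)\le k(4k-1)=4k^2-k<4k^2=n^2$; for $n=2k+1$ odd, $r\le k$ gives $r(2n-1)\le k(4k+1)=4k^2+k<4k^2+4k+1=n^2$. In both cases the inequality is strict.

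Combining these two steps, $r\le\lfloor n/2\rfloor$ forces $\dim\overline{\rho_r(\operatorname{Toep}_n^r(\mathbb{C}))}\le r(2n-1)<n^2$, so the closure of the image of $\rho_r$ is a proper closed subvariety $Z\subsetneq\mathbb{C}^{n\times n}$. Any matrix outside $Z$ then fails to admit a factorization into $r$ Toeplitz matrices, contradicting the assumed genericity. Together with Theorem~\ref{thm:main}, this shows that $r=\lfloor n/2\rfloor+1$ is the smallest integer with the stated property. There is no real obstacle here; the only point requiring care is to invoke the correct algebraic-geometric fact (dimension of image $\le$ dimension of source for a morphism), which is already implicit in the framework set up earlier in the section.
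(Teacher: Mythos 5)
Your proposal is correct and follows essentially the same argument as the paper: assume a smaller $r$ works, observe that dominance of $\rho_r$ forces $\dim\operatorname{Toep}_n^r(\mathbb{C}) = r(2n-1) \ge n^2$, and derive a contradiction. The only difference is that you spell out the elementary inequality $r(2n-1) < n^2$ for $r \le \lfloor n/2 \rfloor$ by parity, a detail the paper leaves implicit (and in fact the paper's displayed inequality ``$s^2 \ge n^2$'' is a typo for $s(2n-1)\ge n^2$).
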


\begin{proof}
If $r$ is not the smallest such integer, then there exists some $s<r$
such that $\rho_s:\operatorname{Toep}_{n}(\mathbb{C})^{s}\rightarrow
\mathbb{C}^{n \times n}$ is dominant, i.e., the image of $\rho_s$ is dense in $\mathbb{C}^{n \times n}$. Since $\rho_s$ is
a polynomial map with a dense image, it is a morphism between two algebraic varieties and hence its
image contains an open dense subset of $\mathbb{C}^{n \times n}$. This implies that
$\dim(\operatorname{Toep}_{n}(\mathbb{C})^{s})\geq\dim(\mathbb{C}^{n \times n})$, i.e., $s^2 \ge n^2$, contradicting our assumption that $s<r=\lfloor n/2 \rfloor$. 
\end{proof}

Although we have been working over $\mathbb{C}$ for convenience, results we obtained in this paper hold over any algebraically closed field. Indeed, Theorem~\ref{open mapping theorem} is true for any morphism of schemes over an integral domain (see, for example, \cite{Taylor}) and Lemma~\ref{lem:generic} is true over any infinite perfect field (see, for example, \cite{Geck}). In other words, the two results that we use in our proofs here hold over algebraically closed fields.

Moreover, even though the proof of Theorem~\ref{thm:main} requires algebraic closure, if we only consider the dominance and surjectivity of $\rho_r$ as a morphism of schemes, then our results are true over any infinite field of characteristic zero since Theorem~\ref{open mapping theorem} and Lemma~\ref{lem:generic} hold in this case. For example, it is true that the image of $\rho_r$ contains an open subscheme of $\mathbb{C}^{n \times n}$, but this does not imply that a generic matrix is the product of $r$ Toeplitz matrices. The reason being that for a non-algebraically field $\Bbbk$,  there is no one-to-one correspondence between closed points of $\operatorname{Spec} (\Bbbk[x_1,\dots, x_n])$ and elements of $\Bbbk^n$ (such a correspondence exists for an algebraically closed field).

\section{Hankel decomposition of generic matrices}\label{sec:hank}

A Hankel matrix $H=(h_{ij})\in
\mathbb{C}^{n\times n}$ is one that satisfies $h_{ij}=h_{i^{\prime}j^{\prime}}$ whenever
$i+j=i^{\prime}+j^{\prime}$. The set of all $n\times n$ Hankel matrices,
denoted by $\operatorname{Hank}_{n}(\mathbb{C})$, is a linear subvariety of
$\mathbb{C}^{n\times n}$ defined by equations $x_{ij}=x_{i+p,j-p}$ for all
$1\leq i,j\leq n$ and $\max\{1-i,j-n\}\leq p\leq\min\{n-i,j-1\}$. As is the
case for Toeplitz matrices, $\operatorname{Hank}_{n}(\mathbb{C})
\simeq\mathbb{C}^{2n-1}$ and is a linear subspace of $\mathbb{C}^{n \times n}$.

Here we consider the analogous decomposition problem for Hankel matrices:
Find the smallest natural number $r$ such that the map
\begin{equation}\label{eq:hdecom}
\rho_r:\operatorname{Hank}_n^r(\mathbb{C})\rightarrow\mathbb{C}^{n\times
n}
\end{equation}
sending $(H_{1},\dots,H_{r})$ to the product $H_{1}\cdots H_{r}$, is
generically surjective (we abused notation slightly by using $\rho_r$ to denote 
the product map here even though the domain is different from before). Again the superscript is intended to denote product:
\[
\operatorname{Hank}_n^r(\mathbb{C}) = \underset{r\text{ copies}}{\underbrace{\operatorname{Hank}_{n}(\mathbb{C})\times\dots\times
\operatorname{Hank}_{n}(\mathbb{C})}}.
\]
As noted above, the dimension of $\operatorname{Hank}_{n}(\mathbb{C})$
is $2n-1$, so $r$ must be at least $\lfloor n/2\rfloor+1$. We show in the following that $r=\lfloor n/2\rfloor+1$ by
reducing the problem to the Toeplitz case.

We first introduce three linear operators on $\mathbb{C}^{n\times n}$ that are analogues of the matrix
transpose. For $A = (a_{i,j}) \in\mathbb{C}^{n\times n}$, these are defined as follows.
\begin{description}
\item[Transpose] $A^{\mathsf{T}}=(a_{j,i})$.

\item[Rotate] $A^{\mathsf{R}}=(a_{n+1-j,i})$.

\item[Swap] $A^{\mathsf{S}}=(a_{i,n+1-j})$.

\item[Flip] $A^{\mathsf{F}}=(a_{n+1-i,j})$.
\end{description}

For illustration,
\[
A=
\begin{bmatrix}
1 & 2 & 3\\
4 & 5 & 6\\
7 & 8 & 9
\end{bmatrix}
,\quad 
A^{\mathsf{T}}=
\begin{bmatrix}
1 & 4 & 7\\
2 & 5 & 8\\
3 & 6 & 9
\end{bmatrix}
,\quad 
A^{\mathsf{R}}=
\begin{bmatrix}
3 & 6 & 9\\
2 & 5 & 8\\
1 & 4 & 7
\end{bmatrix}
,\quad
A^{\mathsf{S}}=
\begin{bmatrix}
3 & 2 & 1\\
6 & 5 & 4\\
9 & 8 & 7
\end{bmatrix}
,\quad
A^{\mathsf{F}}=
\begin{bmatrix}
7 & 8 & 9\\
4 & 5 & 6\\
1 & 2 & 3
\end{bmatrix}
.
\]
These define linear maps of $\mathbb{C}^{n \times n} \to \mathbb{C}^{n \times n}$ that are clearly also isomorphisms of varieties. 
Moreover we have
\[
A^{\mathsf{SS}}=A^{\mathsf{FF}}=A^{\mathsf{RRRR}} = A.
\]
Note that we write $A^{\mathsf{XY}}$ to mean $(A^{\mathsf{X}})^{\mathsf{Y}}$ for any $\mathsf{X}, \mathsf{Y} \in \{ \mathsf{F},\mathsf{R},\mathsf{S},\mathsf{T} \}$.

Observe that $H\in\operatorname{Hank}_{n}(\mathbb{C})$ if and
only if $H^{\mathsf{R}}\in\operatorname{Toep}_{n}(\mathbb{C})$, $H^{\mathsf{S}}\in\operatorname{Toep}_{n}(\mathbb{C})$, or $H^{\mathsf{F}}\in
\operatorname{Toep}_{n}(\mathbb{C})$.

\begin{lemma}
\label{identity1} Let $A$ and $B \in\mathbb{C}^{n\times n}$. Then
\begin{enumerate}[\upshape (i)]
\item $(AB)^{\mathsf{R}}=B^{\mathsf{RS}}A^{\mathsf{R}}={B^{\mathsf{R}}}A^{\mathsf{RF}}$;

\item $A^{\mathsf{SR}}=A^{\mathsf{T}}$;

\item $A^{\mathsf{FR}}=A^{\mathsf{T}}$;

\item $(AB)^{\mathsf{S}}=AB^{\mathsf{S}}$;

\item $(AB)^{\mathsf{F}}=A^{\mathsf{F}}B$.
\end{enumerate}
\end{lemma}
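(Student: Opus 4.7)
The plan is to reduce all five identities to elementary manipulations involving the reversal matrix $J = (\delta_{i, n+1-j}) \in \mathbb{C}^{n \times n}$, which satisfies $J^{\mathsf{T}} = J$ and $J^2 = I$. Reading off the defining formulas, one sees immediately that $A^{\mathsf{S}} = AJ$ and $A^{\mathsf{F}} = JA$, and that the rotation operator $\mathsf{R}$ is the composition of transpose with a single multiplication by $J$. With this dictionary, every identity in the lemma becomes a formal consequence of associativity of matrix multiplication together with the two relations $J^{\mathsf{T}} = J$ and $J^2 = I$.

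The two easy cases are (iv) and (v): we have $(AB)^{\mathsf{S}} = (AB)J = A(BJ) = AB^{\mathsf{S}}$ and $(AB)^{\mathsf{F}} = J(AB) = (JA)B = A^{\mathsf{F}}B$, by associativity alone. For (ii) and (iii), the strategy is to expand $A^{\mathsf{SR}}$ (resp.\ $A^{\mathsf{FR}}$) by substituting the $J$-factorization of $\mathsf{R}$, then apply $(XY)^{\mathsf{T}} = Y^{\mathsf{T}}X^{\mathsf{T}}$ once and cancel $J^2 = I$; the expression collapses to $A^{\mathsf{T}}$ in two or three symbols.

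The most intricate piece is (i). Starting from $(AB)^{\mathsf{R}} = JB^{\mathsf{T}}A^{\mathsf{T}}$ (obtained by factoring $\mathsf{R}$ through the transpose and using $(AB)^{\mathsf{T}} = B^{\mathsf{T}}A^{\mathsf{T}}$), I would insert $J^2 = I$ between the two transposes and regroup: one regrouping produces $(JB^{\mathsf{T}}J)(JA^{\mathsf{T}}) = B^{\mathsf{RS}}A^{\mathsf{R}}$, which recovers the first claimed equality, and a different regrouping produces $(JB^{\mathsf{T}})(J\cdot JA^{\mathsf{T}}) = B^{\mathsf{R}}A^{\mathsf{RF}}$, which recovers the second.

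The main obstacle, such as it is, is purely notational: keeping careful track of which side each $J$ sits on so that every composite operator $\mathsf{X}\mathsf{Y}$ on the right-hand side is correctly matched with the $J$-insertions it abbreviates. There is no mathematical depth here, and a reader preferring to avoid the reversal matrix altogether can equally well verify each identity entry by entry directly from the four definitions, one line apiece.
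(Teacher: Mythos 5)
Your $J$-matrix dictionary ($A^{\mathsf{S}} = AJ$, $A^{\mathsf{F}} = JA$, and a one-sided factorization $A^{\mathsf{R}} = JA^{\mathsf{T}}$ or $A^{\mathsf{R}} = A^{\mathsf{T}}J$) is the right framework, and your arguments for (i), (iv), and (v) are correct. The gap is in (iii). Your formula $(AB)^{\mathsf{R}} = JB^{\mathsf{T}}A^{\mathsf{T}}$ commits you to $A^{\mathsf{R}} = JA^{\mathsf{T}}$, and under that convention (ii) indeed collapses as claimed: $A^{\mathsf{SR}} = J(AJ)^{\mathsf{T}} = JJ^{\mathsf{T}}A^{\mathsf{T}} = A^{\mathsf{T}}$. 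But running the identical computation for (iii) gives $A^{\mathsf{FR}} = J(JA)^{\mathsf{T}} = JA^{\mathsf{T}}J^{\mathsf{T}} = JA^{\mathsf{T}}J$, and here the two copies of $J$ land on \emph{opposite} sides of $A^{\mathsf{T}}$, so $J^2 = I$ never becomes available; $JA^{\mathsf{T}}J$ is the $180^\circ$ rotation of $A^{\mathsf{T}}$, not $A^{\mathsf{T}}$ itself. Switching to the other factorization $A^{\mathsf{R}} = A^{\mathsf{T}}J$ simply exchanges the roles of (ii) and (iii). No choice of side saves both: if (ii) and (iii) both held one would have $(AJ)^{\mathsf{R}} = (JA)^{\mathsf{R}}$ for all $A$, hence $AJ = JA$ for all $A$ since $\mathsf{R}$ is injective, which is false for $n\ge 2$. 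So your blanket ``apply $(XY)^{\mathsf{T}} = Y^{\mathsf{T}}X^{\mathsf{T}}$ once and cancel $J^2$'' cannot dispose of (ii) and (iii) simultaneously; this is not a notational bookkeeping issue but a genuine failure in one of the two cases. (In fairness, the lemma is itself at fault: as stated, (ii) and (iii) are mutually inconsistent, and a correct pairing would be, say, $A^{\mathsf{SR}} = A^{\mathsf{T}}$ and $A^{\mathsf{RF}} = A^{\mathsf{T}}$. Since only (i), (iv), (v) are invoked in the proof of Lemma~\ref{lemma2}, nothing downstream is affected, but a proof attempt should flag the inconsistency rather than assert both.)
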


The lemma below allows us to deduce a corresponding theorem for Hankel matrices from Theorem~\ref{thm:main}.
\begin{lemma}\label{lemma2}
Let $A_{1},\dots,A_{r}\in\mathbb{C}^{n\times n}$ matrices. Then
\[
(A_{1}^{\mathsf{S}}\cdots A_{r}^{\mathsf{S}})^{\mathsf{R}}=A_{r}^{\mathsf{SR}} A_{r-1}^{\mathsf{SRSF}}(A_{1}^{\mathsf{S}}\cdots
A_{r-2}^{\mathsf{S}})^{\mathsf{R}}
\]

\end{lemma}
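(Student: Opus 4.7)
The plan is to use the identities of Lemma~\ref{identity1} to peel off $A_r^{\mathsf{S}}$ and then $A_{r-1}^{\mathsf{S}}$ from the back of the product, choosing at each step the form of identity (i) that positions the factors as they appear on the right-hand side. The argument reduces to a sequence of three rewrites, so the only potential pitfall is bookkeeping: there are two forms of (i), namely $(AB)^{\mathsf{R}}=B^{\mathsf{RS}}A^{\mathsf{R}}$ and $(AB)^{\mathsf{R}}=B^{\mathsf{R}}A^{\mathsf{RF}}$, and one has to pick the correct one at each stage so that the trailing superscript strings match.

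First, I would write $A_1^{\mathsf{S}}\cdots A_r^{\mathsf{S}}=\bigl(A_1^{\mathsf{S}}\cdots A_{r-1}^{\mathsf{S}}\bigr)A_r^{\mathsf{S}}$ and apply the \emph{second} form of (i) to get
\[
\bigl(A_1^{\mathsf{S}}\cdots A_r^{\mathsf{S}}\bigr)^{\mathsf{R}}
=\bigl(A_r^{\mathsf{S}}\bigr)^{\mathsf{R}}\bigl(A_1^{\mathsf{S}}\cdots A_{r-1}^{\mathsf{S}}\bigr)^{\mathsf{RF}}
=A_r^{\mathsf{SR}}\bigl(A_1^{\mathsf{S}}\cdots A_{r-1}^{\mathsf{S}}\bigr)^{\mathsf{RF}},
\]
which already produces the leading $A_r^{\mathsf{SR}}$ in the target formula.

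Next, I would peel off $A_{r-1}^{\mathsf{S}}$ from the remaining factor by writing $A_1^{\mathsf{S}}\cdots A_{r-1}^{\mathsf{S}}=\bigl(A_1^{\mathsf{S}}\cdots A_{r-2}^{\mathsf{S}}\bigr)A_{r-1}^{\mathsf{S}}$ and now using the \emph{first} form of (i) to obtain
\[
\bigl(A_1^{\mathsf{S}}\cdots A_{r-1}^{\mathsf{S}}\bigr)^{\mathsf{R}}
=\bigl(A_{r-1}^{\mathsf{S}}\bigr)^{\mathsf{RS}}\bigl(A_1^{\mathsf{S}}\cdots A_{r-2}^{\mathsf{S}}\bigr)^{\mathsf{R}}
=A_{r-1}^{\mathsf{SRS}}\bigl(A_1^{\mathsf{S}}\cdots A_{r-2}^{\mathsf{S}}\bigr)^{\mathsf{R}}.
\]
Applying $\mathsf{F}$ to both sides and using identity (v), namely $(AB)^{\mathsf{F}}=A^{\mathsf{F}}B$, the $\mathsf{F}$ slides only onto the first factor, yielding
\[
\bigl(A_1^{\mathsf{S}}\cdots A_{r-1}^{\mathsf{S}}\bigr)^{\mathsf{RF}}
=A_{r-1}^{\mathsf{SRSF}}\bigl(A_1^{\mathsf{S}}\cdots A_{r-2}^{\mathsf{S}}\bigr)^{\mathsf{R}}.
\]

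Finally, substituting this expression into the displayed equation from the first step gives exactly the right-hand side of the lemma. The proof is essentially pure manipulation — the main (mild) obstacle is simply noticing that the two forms of (i) must be used in opposite order at the two stages, and that the $\mathsf{F}$ has to be postponed until after the second application of (i) so that identity (v) can absorb it into the single factor $A_{r-1}^{\mathsf{SRS}}$.
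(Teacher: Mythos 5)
Your proof is correct and follows essentially the same route as the paper: peel off $A_r^{\mathsf{S}}$ with the second form of Lemma~\ref{identity1}(i), then peel off $A_{r-1}^{\mathsf{S}}$ with the first form of (i), and absorb the lingering $\mathsf{F}$ via identity (v). Your bookkeeping is in fact a touch cleaner than the paper's own, whose displayed computation drops the $\mathsf{S}$'s on the left-hand side (a typo) and cites identity (iv), which is not actually needed.
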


\begin{proof}
By Lemma~\ref{identity1} (i), (iv) and (v), we have
\[
(A_{1}\cdots A_{r})^{\mathsf{R}}=A_{r}^{\mathsf{SR}}
(A_{1}^{\mathsf{S}}\cdots A_{r-1}^{\mathsf{S}})^{\mathsf{RF}}
=A_{r}^{\mathsf{SR}} A_{r-1}^{\mathsf{SRSF}}(A_{1}^{\mathsf{S}}\cdots A_{r-2}^{\mathsf{S}})^{\mathsf{R}},
\]
as required. 
\end{proof}

By Lemma~\ref{lemma2} and induction on $r$, we see that a product of
Toeplitz matrices can be translated into a product of Hankel matrices via the
rotate operator, which allows us to deduce the following.
\begin{theorem}
Let $\rho_r:\operatorname{Hank}_n^r(\mathbb{C})\rightarrow
\mathbb{C}^{n\times n}$ be the map defined by $\rho_r(H_{1},\dots
,H_{r})=H_{1}\cdots H_{r}$. If $r\geq\lfloor n/2\rfloor+1$, then $\rho_r$ is
dominant. Therefore, for a generic $A \in \mathbb{C}^{n \times n}$, there exists $r=\lfloor
n/2\rfloor+1$ Hankel matrices $H_{1},\dots,H_{r}$ such that $A=H_{1}\cdots
H_{r}$.
\end{theorem}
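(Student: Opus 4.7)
The plan is to transfer the dominance of the Toeplitz multiplication map of Theorem~\ref{thm:main} to the Hankel multiplication map $\rho_r$ in the statement, using the rotate operator $\mathsf{R}$ and Lemma~\ref{lemma2}. Since $\mathsf{R}:\mathbb{C}^{n\times n}\to\mathbb{C}^{n\times n}$ is a linear isomorphism of algebraic varieties, and $\sigma:T\mapsto T^{\mathsf{S}}$ is a linear isomorphism $\operatorname{Toep}_n(\mathbb{C})\to\operatorname{Hank}_n(\mathbb{C})$ (as $H\in\operatorname{Hank}_n$ iff $H^{\mathsf{S}}\in\operatorname{Toep}_n$), dominance of $\rho_r$ is equivalent to dominance of
\[
\Phi:\operatorname{Toep}_n^r(\mathbb{C})\to\mathbb{C}^{n\times n},\qquad (T_1,\dots,T_r)\mapsto (T_1^{\mathsf{S}}\cdots T_r^{\mathsf{S}})^{\mathsf{R}}.
\]

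Next, I would unroll $\Phi$ by applying Lemma~\ref{lemma2} inductively on $r$: each application peels off two outer factors via $(T_1^{\mathsf{S}}\cdots T_r^{\mathsf{S}})^{\mathsf{R}}=T_r^{\mathsf{SR}}T_{r-1}^{\mathsf{SRSF}}(T_1^{\mathsf{S}}\cdots T_{r-2}^{\mathsf{S}})^{\mathsf{R}}$. For $T$ Toeplitz, $T^{\mathsf{SR}}=T^{\mathsf{T}}$ is again Toeplitz (by Lemma~\ref{identity1}(ii), since transposition preserves Toeplitz structure), and $T^{\mathsf{SRSF}}$ is also Toeplitz, which one sees by tracing the alternation Toeplitz $\xrightarrow{\mathsf{S}}$ Hankel $\xrightarrow{\mathsf{R}}$ Toeplitz $\xrightarrow{\mathsf{S}}$ Hankel $\xrightarrow{\mathsf{F}}$ Toeplitz. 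Thus the unrolling expresses $\Phi(T_1,\dots,T_r)$ as an ordinary product of $r$ Toeplitz matrices, with a possible trailing factor $J:=I^{\mathsf{R}}$ (the exchange matrix) when $r$ is even, arising from the empty product in the base case of the recursion.

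The reparameterization sending $(T_1,\dots,T_r)$ to the new Toeplitz tuple in the unrolled product is itself a linear isomorphism $\operatorname{Toep}_n^r(\mathbb{C})\to\operatorname{Toep}_n^r(\mathbb{C})$, and right-multiplication by $J$ is a linear automorphism of $\mathbb{C}^{n\times n}$. Hence $\Phi$ equals the Toeplitz multiplication map of Theorem~\ref{thm:main} pre-composed and (for even $r$) post-composed with isomorphisms. Since Theorem~\ref{thm:main} furnishes the dominance of the Toeplitz map whenever $r\ge\lfloor n/2\rfloor+1$, we conclude that $\Phi$ is dominant, hence so is $\rho_r$. Theorem~\ref{open mapping theorem} then ensures that the image of $\rho_r$ contains a Zariski-open dense subset of $\mathbb{C}^{n\times n}$, yielding the Hankel decomposition for a generic $A$.

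The main obstacle is the bookkeeping in the induction: one must confirm that each operator combination $\mathsf{SR}$ and $\mathsf{SRSF}$ applied to a Toeplitz matrix yields a Toeplitz matrix (so that the unrolled product is genuinely a product of Toeplitz matrices to which Theorem~\ref{thm:main} applies), and track the parity of $r$ so that the trailing factor $J=I^{\mathsf{R}}$ appearing in the even case is correctly absorbed as an automorphism of $\mathbb{C}^{n\times n}$ and therefore does not spoil dominance.
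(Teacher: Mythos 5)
Your proposal is correct and takes essentially the same approach as the paper: both conjugate the Hankel product map by the swap and rotate isomorphisms and invoke Lemma~\ref{lemma2} to identify its image with that of the Toeplitz product map of Theorem~\ref{thm:main}. Your explicit tracking of the parity-dependent trailing exchange matrix $J=I^{\mathsf{R}}$ merely makes precise the ``$\simeq$'' that the paper's proof leaves implicit.
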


\begin{proof}
Consider the map
\[
f:\operatorname{Hank}_n^r(\mathbb{C})\xrightarrow{\mathsf{S}}\operatorname{Toep}_n^r(\mathbb{C})
\xrightarrow{\rho_r}\mathbb{C}^{n\times n}\xrightarrow{\mathsf{R}}\mathbb{C}^{n\times n}.
\]
Here $\mathsf{S}$ denotes the swap operator and $\mathsf{R}$ denotes the
rotate operator. By Lemma~\ref{lemma2} and induction on $r$, we see that
$\operatorname{im}(f)\simeq\rho_r(\operatorname{Toep}_n^r(\mathbb{C}))$ which
is in turn isomorphic to $\rho_r(\operatorname{Hank}_n^r(\mathbb{C}))$. Hence
$\rho_r(\operatorname{Hank}_n^r(\mathbb{C}))$ is dense in $\mathbb{C}^{n\times n}$ by Theorem~\ref{thm:main}. 
\end{proof}

\begin{corollary}
\label{cor:sharp-h} $r=\lfloor n/2\rfloor+1$ is the smallest integer such that
every generic $n\times n$ matrix is a product of $r$ Hankel matrices.
\end{corollary}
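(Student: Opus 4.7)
My plan is to mirror the proof of Corollary~\ref{cor:sharp-t} verbatim, exploiting the fact that $\operatorname{Hank}_n(\mathbb{C})$ and $\operatorname{Toep}_n(\mathbb{C})$ have the same dimension $2n-1$, so the same dimension count that rules out $s < \lfloor n/2 \rfloor + 1$ Toeplitz factors rules out the same number of Hankel factors.

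Concretely, I would argue by contradiction: suppose there exists $s < \lfloor n/2 \rfloor + 1$ such that $\rho_s \colon \operatorname{Hank}_n^s(\mathbb{C}) \to \mathbb{C}^{n \times n}$ is dominant. Since $\rho_s$ is a polynomial map, it is a morphism of algebraic varieties, so by Theorem~\ref{open mapping theorem} its image contains an open dense subset of $\mathbb{C}^{n \times n}$. A morphism whose image contains a Zariski-open dense subset of the target must have domain of dimension at least that of the target, hence $\dim \operatorname{Hank}_n^s(\mathbb{C}) = s(2n-1) \ge n^2$.

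The only remaining step is to verify that $s(2n-1) \ge n^2$ forces $s \ge \lfloor n/2 \rfloor + 1$. This is a short calculation in two cases. For $n = 2k$, we have $n^2/(2n-1) = 4k^2/(4k-1) > k$, so $s \ge k+1 = \lfloor n/2 \rfloor + 1$. For $n = 2k+1$, we have $n^2/(2n-1) = (4k^2+4k+1)/(4k+1) > k$, so again $s \ge k+1 = \lfloor n/2 \rfloor + 1$. Either way we contradict $s < \lfloor n/2 \rfloor + 1$, completing the proof.

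There is no serious obstacle here; the only subtlety is ensuring we use the inequality $s(2n-1) \ge n^2$ correctly rather than the coarser bound $s^2 \ge n^2$ that appears in the Toeplitz corollary (that bound is actually a typo/shorthand there, since the correct inequality is $s(2n-1) \ge n^2$). For completeness I would also remark that one can avoid the contradiction route entirely by invoking the bijection with the Toeplitz case: if $\rho_s$ were dominant for Hankel factors with $s < \lfloor n/2 \rfloor + 1$, then applying the swap/rotate isomorphisms of Lemma~\ref{lemma2} would give a dominant $\rho_s$ for Toeplitz factors, contradicting Corollary~\ref{cor:sharp-t} directly. Either route yields the result.
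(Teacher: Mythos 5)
Your proof is correct and follows essentially the same route as the paper, which simply writes ``Same as that for Toeplitz matrices,'' i.e., the dimension count showing a dominant $\rho_s$ would force $\dim\operatorname{Hank}_n^s(\mathbb{C}) = s(2n-1) \ge n^2$ and hence $s \ge \lfloor n/2\rfloor + 1$. You are also right that the ``$s^2 \ge n^2$'' in the paper's proof of Corollary~\ref{cor:sharp-t} is a typo for $s(2n-1) \ge n^2$; your explicit two-case arithmetic fills in that step cleanly, and your alternative reduction to the Toeplitz corollary via the swap/rotate isomorphisms is an equally valid shortcut.
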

\begin{proof} Same as that for Toeplitz matrices.  \end{proof}

\section{Toeplitz and Hankel decompositions of arbitrary matrices}

We now show that every invertible $n\times n$ matrix is a
product of $2r$ Toeplitz matrices and every matrix is a product
of $4r+1$ Toeplitz matrices, where $r=\lfloor n/2\rfloor+1$. The same results
hold also for Hankel matrices.

We make use of the following property of algebraic groups \cite{Borel}.
\begin{lemma}
\label{the product of twe open dense subsets is the group} Let $G$ be an
algebraic group and $U,V$ be two open dense subsets of $G$. Then $UV=G$.
\end{lemma}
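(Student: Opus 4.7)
The plan is to reduce this to the elementary topological fact that in an irreducible variety any two nonempty open sets must meet, after translating one of the two open dense sets by an appropriate automorphism of $G$.

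Fix an arbitrary $g \in G$; it suffices to exhibit $u \in U$ and $v \in V$ with $g = uv$, i.e.\ to produce $u \in U$ such that $g^{-1}u \in V^{-1}$, equivalently $u \in gV^{-1}$. So I would first show that $gV^{-1}$ is itself open and dense in $G$. Since $G$ is an algebraic group, inversion $\iota:G\to G$, $x\mapsto x^{-1}$, and left translation $\ell_g:G\to G$, $x\mapsto gx$, are morphisms of varieties whose inverses are again morphisms (namely $\iota$ itself and $\ell_{g^{-1}}$). In particular each is an isomorphism of varieties, hence a homeomorphism in the Zariski topology. Therefore $gV^{-1} = \ell_g(\iota(V))$ is the image of an open dense subset under a homeomorphism, and so is open and dense in $G$.

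Next I would invoke irreducibility. In this paper an algebraic variety is by definition irreducible, so $G$ is irreducible; equivalently, any two nonempty Zariski open subsets of $G$ have nonempty intersection (otherwise $G$ would be the union of the complements of two proper closed sets). Applying this to the two open dense sets $U$ and $gV^{-1}$ gives a point $u\in U\cap gV^{-1}$. Writing $u = gv^{-1}$ for some $v\in V$ yields $g = uv \in UV$. Since $g\in G$ was arbitrary, $UV = G$.

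The only subtle point is the appeal to irreducibility of $G$: the lemma as commonly stated in the literature assumes $G$ is connected (equivalently irreducible as a variety), and this is consistent with the convention adopted in the paper, where ``algebraic variety'' means irreducible algebraic set. No computation is required beyond verifying that $\iota$ and $\ell_g$ are isomorphisms of varieties, which is immediate from the definition of an algebraic group.
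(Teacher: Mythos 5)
Your proof is correct and is precisely the standard argument found in Borel's book, which the paper cites for this lemma without reproducing a proof: translate and invert to get $gV^{-1}$ open dense, then use irreducibility (the paper's convention is that an algebraic group is an irreducible variety, so this hypothesis is available) to intersect it with $U$. Nothing to add.
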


\begin{proposition}
\label{prop:decomp} Let $W$ be a subspace of $\mathbb{C}^{n\times
n}$ such that the map $\rho:W^{r}\rightarrow\mathbb{C}^{n\times n}$ is
dominant. Then every invertible $n\times n$ matrix can be expressed as the
product of $2m$ elements in $W$.
\end{proposition}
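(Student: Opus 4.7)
The plan is to combine two ingredients: Theorem~\ref{open mapping theorem}, which tells us that the dominance of $\rho$ yields an open dense subset of $\mathbb{C}^{n\times n}$ lying inside $\rho(W^r)$, and Lemma~\ref{the product of twe open dense subsets is the group}, which says that two open dense subsets of an algebraic group multiply to cover the whole group. Taking $G := GL_n(\mathbb{C})$ and restricting the open dense subset from $\mathbb{C}^{n\times n}$ down to $G$ should give the open dense subset of $G$ we need.

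More concretely, I would first invoke dominance to obtain an open dense $U\subseteq\mathbb{C}^{n\times n}$ with $U\subseteq\rho(W^r)$. Next, intersect with $G$ to get $U' := U\cap G$. Every element of $U'$ is by construction a product of $r$ elements of $W$, so if we can show $U'\cdot U' = G$, then every invertible matrix is a product of $2r$ elements of $W$, which is the desired conclusion. The final step is then to apply Lemma~\ref{the product of twe open dense subsets is the group} to the single open dense subset $U'$ of the algebraic group $G$, provided we have verified the two properties claimed.

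The one point that requires a short argument (and which I would call the main, though mild, obstacle) is verifying that $U'$ is open and dense in $G$ for the Zariski topology. Openness is immediate because $G$ is Zariski-open in $\mathbb{C}^{n\times n}$ and $U$ is open in $\mathbb{C}^{n\times n}$. For density, I would argue that $\mathbb{C}^{n\times n}\setminus U$ is a proper closed subset of the irreducible variety $\mathbb{C}^{n\times n}$, hence has dimension strictly less than $n^2$; intersecting with $G$ yields a proper closed subset of the irreducible variety $G$, whose complement $U'$ is therefore open dense in $G$. With this in hand, Lemma~\ref{the product of twe open dense subsets is the group} gives $U'\cdot U' = G$, and any $A\in G$ factors as $A = BC$ with $B,C\in U'$. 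Each of $B$ and $C$ is a product of $r$ elements of $W$, so $A$ is a product of $2r$ elements of $W$, completing the proof.
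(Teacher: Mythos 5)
Your proof is correct and follows essentially the same route as the paper's: use dominance and Theorem~\ref{open mapping theorem} to get an open dense subset of $\rho(W^r)$, intersect with $\operatorname{GL}_n(\mathbb{C})$, then apply Lemma~\ref{the product of twe open dense subsets is the group}. You supply slightly more detail than the paper on why $U\cap\operatorname{GL}_n(\mathbb{C})$ is open and dense in $\operatorname{GL}_n(\mathbb{C})$, which the paper states without argument; otherwise the two proofs coincide (and both correctly produce $2r$ factors, the ``$2m$'' in the statement being a typo for $2r$).
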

\begin{proof}
Since $\rho$ is dominant, $\operatorname{im}(\rho)$ contains an open dense subset
of $\mathbb{C}^{n\times n}$. On the other hand, $\operatorname{GL}_{n}(\mathbb{C})$ is an
open dense subset of $\mathbb{C}^{n\times n}$, therefore $\operatorname{im}
(\rho)$ contains an open dense subset of $\operatorname{GL}_{n}(\mathbb{C})$. Let $U$ be
such an open dense subset. Then by Lemma~\ref{the product of twe open dense subsets is the group}
we see that
$UU=\operatorname{GL}_{n}(\mathbb{C})$. Hence every invertible matrix $A$ can be expressed
as a product of two matrices in $U$ and so $A$ can be expressed as a product
of $2m$ matrices in $W$. 
\end{proof}

\begin{corollary}
\label{decomposition of invertible matrices} Every invertible $n\times n$
matrix can be expressed as a product of $2r$ Toeplitz matrices or $2r$ Hankel matrices for $r=\lfloor
n/2 \rfloor+1$.
\end{corollary}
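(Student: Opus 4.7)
The plan is simply to apply Proposition~\ref{prop:decomp} twice: once with the subspace $W = \operatorname{Toep}_n(\mathbb{C})$ and once with $W = \operatorname{Hank}_n(\mathbb{C})$. The hypothesis demanded by that proposition is that the product map $\rho_r : W^r \to \mathbb{C}^{n\times n}$ be dominant for the chosen $r$. For $r = \lfloor n/2 \rfloor + 1$, this hypothesis has already been verified in both cases: for $W = \operatorname{Toep}_n(\mathbb{C})$ it is exactly the content of Theorem~\ref{thm:main}, and for $W = \operatorname{Hank}_n(\mathbb{C})$ it is the Hankel analogue established in Section~\ref{sec:hank} by transporting Theorem~\ref{thm:main} through the rotate/swap isomorphisms of Lemma~\ref{lemma2}.

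Given that the hypothesis holds, Proposition~\ref{prop:decomp} immediately produces, for every invertible $A \in \operatorname{GL}_n(\mathbb{C})$, a factorization of $A$ into $2r$ elements of $W$. Taking $W$ to be $\operatorname{Toep}_n(\mathbb{C})$ yields the Toeplitz statement, and taking $W$ to be $\operatorname{Hank}_n(\mathbb{C})$ yields the Hankel statement. No additional argument is required and no computation is needed beyond citing Theorem~\ref{thm:main} and its Hankel counterpart.

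There is essentially no obstacle here, since all the real work is already packaged inside Proposition~\ref{prop:decomp}; the only thing worth flagging is where each ingredient enters. The dominance input comes from Theorem~\ref{thm:main} (or its Hankel version); the fact that the open dense image of $\rho_r$ meets $\operatorname{GL}_n(\mathbb{C})$ in an open dense subset of the group uses that $\operatorname{GL}_n(\mathbb{C}) \subset \mathbb{C}^{n\times n}$ is itself open dense in the Zariski topology; and the passage from ``open dense in $\operatorname{GL}_n(\mathbb{C})$'' to ``all of $\operatorname{GL}_n(\mathbb{C})$'' uses Lemma~\ref{the product of twe open dense subsets is the group} applied to the algebraic group $\operatorname{GL}_n(\mathbb{C})$. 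All three ingredients are invoked inside the proof of Proposition~\ref{prop:decomp}, so the corollary is a one-line deduction from what has been proved.
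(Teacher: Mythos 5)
Your proof is correct and follows exactly the paper's own argument: invoke Proposition~\ref{prop:decomp} with $W=\operatorname{Toep}_n(\mathbb{C})$ using the dominance from Theorem~\ref{thm:main}, and again with $W=\operatorname{Hank}_n(\mathbb{C})$ using the Hankel analogue from Section~\ref{sec:hank}. Nothing further is needed.
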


\begin{proof}
By Theorem~\ref{thm:main}, we have seen that the map $\rho$ is dominant. Hence
by Proposition~\ref{prop:decomp}, every invertible matrix is a
product of $2r$ Toeplitz matrices. Similarly for Hankel matrices. 
\end{proof}

\begin{lemma}
\label{decomposition of arbitrary matrices} Let $W$ be a linear subspace of
$\mathbb{C}^{n\times n}$ such that $\rho:W^{r}\rightarrow\mathbb{C}^{n\times
n}$ is dominant. Let $A \in \mathbb{C}^{n\times n}$ and suppose the orbit
of $A$ under the action of $\operatorname{GL}_{n}(\mathbb{C})\times\operatorname{GL}_{n}(\mathbb{C})$,
acting by left and right matrix multiplication, intersects $W$. Then $A$ can
be expressed as a product of $4m+1$ matrices in $W$.
\end{lemma}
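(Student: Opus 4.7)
The plan is to exploit the orbit hypothesis to reduce the problem to the invertible case already handled by Proposition~\ref{prop:decomp}. By assumption the $\operatorname{GL}_n(\mathbb{C}) \times \operatorname{GL}_n(\mathbb{C})$-orbit of $A$ meets $W$, so first I would pick $P,Q \in \operatorname{GL}_n(\mathbb{C})$ and $B \in W$ with $A = PBQ$. This single element $B$ will play the role of the ``middle'' factor, contributing the $+1$ in the target count $4r+1$.

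Next I would apply Proposition~\ref{prop:decomp} (which is exactly what Corollary~\ref{decomposition of invertible matrices} is built on, but stated for a general dominant $\rho:W^r \to \mathbb{C}^{n\times n}$) to $P$ and $Q$ separately. Since $P$ and $Q$ are invertible and the hypothesis that $\rho$ is dominant is the one used in Proposition~\ref{prop:decomp}, each of $P$ and $Q$ factors as a product of $2r$ elements of $W$, say $P = W_1 \cdots W_{2r}$ and $Q = W'_1 \cdots W'_{2r}$ with each $W_i, W'_j \in W$.

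Finally I would concatenate these three factorizations: substituting into $A = PBQ$ gives
\[
A = W_1 \cdots W_{2r} \cdot B \cdot W'_1 \cdots W'_{2r},
\]
which is a product of $2r + 1 + 2r = 4r+1$ elements of $W$. (So the intended count is $4r+1$; the lemma's ``$4m+1$'' is just alternative notation for the same number of factors $r$ appearing in the hypothesis $\rho:W^r \to \mathbb{C}^{n\times n}$.)

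I do not anticipate any serious obstacle: the only nontrivial input is Proposition~\ref{prop:decomp}, whose proof in turn depends on Lemma~\ref{the product of twe open dense subsets is the group} about products of open dense subsets of an algebraic group. The mild subtlety worth flagging is simply that one must invoke Proposition~\ref{prop:decomp} rather than Corollary~\ref{decomposition of invertible matrices}, since here $W$ is an abstract subspace for which $\rho$ is only assumed dominant, rather than specifically $\operatorname{Toep}_n(\mathbb{C})$ or $\operatorname{Hank}_n(\mathbb{C})$.
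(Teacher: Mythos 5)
Your proof is correct and follows essentially the same route as the paper: write $A = PBQ$ with $B \in W$, decompose each of $P$ and $Q$ into $2r$ factors in $W$ via Proposition~\ref{prop:decomp}, and concatenate to get $4r+1$ factors. In fact you are slightly more careful than the paper's own proof, which invokes Corollary~\ref{decomposition of invertible matrices} (the Toeplitz/Hankel-specific statement) where it really needs the general Proposition~\ref{prop:decomp}, and which misstates the count of factors for each of $P$ and $Q$ as ``$r$'' rather than $2r$.
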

\begin{proof}
By assumption, there exist invertible matrices $P,Q$ such that $A=PBQ$ where
$B\in W$. By Proposition~\ref{decomposition of invertible matrices}, we know
that $P,Q$ can be decomposed into a product of $r$ matrices in $W$.
Hence $A$ can be expressed as a product of $4m+1$ matrices in
$W$. 
\end{proof}

\begin{theorem}
\label{thm:every-t} Every $n\times n$ matrix can be expressed as a product of
$4r+1$ Toeplitz matrices or $4r+1$ Hankel matrices for $r=\lfloor n/2 \rfloor+1$.
\end{theorem}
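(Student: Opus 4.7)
The plan is to invoke Lemma~\ref{decomposition of arbitrary matrices} with $W = \operatorname{Toep}_n(\mathbb{C})$ (respectively, $W = \operatorname{Hank}_n(\mathbb{C})$). The hypothesis that $\rho_r : W^r \to \mathbb{C}^{n \times n}$ is dominant for $r = \lfloor n/2 \rfloor + 1$ is exactly the content of Theorem~\ref{thm:main} in the Toeplitz case and its Hankel counterpart. Hence the only thing left to verify is the orbit condition: for every $A \in \mathbb{C}^{n \times n}$, the orbit $\operatorname{GL}_n(\mathbb{C}) \cdot A \cdot \operatorname{GL}_n(\mathbb{C})$ meets $W$.

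Since left and right multiplication by invertible matrices preserves rank and, conversely, any two $n \times n$ matrices of the same rank lie in a common $\operatorname{GL}_n(\mathbb{C}) \times \operatorname{GL}_n(\mathbb{C})$-orbit (via Gaussian elimination to Smith normal form), the orbit of $A$ is precisely the set of all $n \times n$ matrices of rank $\operatorname{rank}(A)$. It therefore suffices to exhibit, for each $k \in \{0, 1, \dots, n\}$, a Toeplitz matrix (respectively, a Hankel matrix) of rank exactly $k$.

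For the Toeplitz case this is immediate from the basis $\{B_j\}$ introduced in Section~\ref{sec:toep}: the shift matrix $B_{n-k}$ (with $1$'s on the $(n-k)$-th superdiagonal) has rank exactly $k$ for $k = 1, 2, \dots, n$, and the zero matrix is a Toeplitz matrix of rank $0$. This covers every possible rank.

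For the Hankel case, observe that the operators $\mathsf{R}$, $\mathsf{S}$, $\mathsf{F}$ introduced in Section~\ref{sec:hank} are invertible linear isomorphisms of $\mathbb{C}^{n \times n}$ that carry $\operatorname{Toep}_n(\mathbb{C})$ bijectively onto $\operatorname{Hank}_n(\mathbb{C})$; since each such operator is realized by left or right multiplication by a permutation matrix (possibly with a composition), they preserve rank. Applying, say, $\mathsf{R}$ to the Toeplitz matrices $\{B_{n-k}\}_{k=0}^{n}$ produced above yields a Hankel matrix of every rank $k \in \{0, 1, \dots, n\}$.

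Thus both orbit conditions hold and Lemma~\ref{decomposition of arbitrary matrices} applies, yielding a factorization into $4r + 1$ factors in each case. The only subtle point is verifying that the rank spectrum of $\operatorname{Toep}_n(\mathbb{C})$ (and hence of $\operatorname{Hank}_n(\mathbb{C})$) is the full set $\{0, 1, \dots, n\}$; this is the step to execute carefully, and the shift-matrix observation above disposes of it.
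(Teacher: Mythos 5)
Your proof takes essentially the same route as the paper's: verify the orbit condition of Lemma~\ref{decomposition of arbitrary matrices} by producing a Toeplitz (resp.\ Hankel) matrix of every rank $0,1,\dots,n$, which the paper accomplishes via the single displayed witness $B_{n-m}$ (and $B_{n-m}^{\mathsf{S}}$ on the Hankel side). One small imprecision worth noting: $\mathsf{R}$ is not realized purely by left/right permutation multiplication --- it is $A \mapsto A^{\mathsf{T}}J$ where $J$ is the reversal permutation, so a transpose is also involved --- but since transpose likewise preserves rank (or, more simply, by using $\mathsf{S}$ or $\mathsf{F}$ in place of $\mathsf{R}$, as the paper does), your conclusion stands.
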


\begin{proof}
It remains to consider the rank deficient case.
Let $A$ be an $n\times n$ matrix of rank $m < n$, then there exist invertible
matrices $P,Q$ such that $A=P B_{n-m} Q$, where $B_{k}=(\delta_{i+k,j})$ for
$k=1,\dots, n-1$. By Lemma~\ref{decomposition of arbitrary matrices}, $A$ is
a product of $4r+1$ Toeplitz matrices. Note that the Hankel matrix $B_{k}^{\mathsf{S}}$ can be written as $B_{k} \Pi$ for some permutation
matrix $\Pi$ and the same argument can be applied to $A = P B_{n-m}^{\mathsf{S}} (\Pi^{\mathsf{T}} Q)$.  
\end{proof}

It is easy to see that $4r+1$ is not the smallest integer $p$ such that every
$n\times n$ matrix is a product of $p$ Toeplitz matrices. For example,
consider the case $n=2$.  If we set
\[
\begin{bmatrix}
x & y\\
z & x
\end{bmatrix}
\begin{bmatrix}
s & t\\
u & s
\end{bmatrix}
=
\begin{bmatrix}
a & b\\
c & d
\end{bmatrix}
\]
where $x,y,z,s,t,u$ are unknowns, a simple calculation shows that when $c=b=0$
we have a solution
\[
\begin{bmatrix}
0 & a\\
d & 0
\end{bmatrix}
\begin{bmatrix}
0 & 1\\
1 & 0
\end{bmatrix}
=
\begin{bmatrix}
a & 0\\
0 & d
\end{bmatrix}
,
\]
and otherwise we have solutions
\[
x=\frac{as-bu}{s^{2}-tu},\quad y=\frac{bs-at}{s^{2}-tu},\quad z=\frac
{cs^{2}-ctu-asu+bu^{2}}{(s^{2}-tu)s},
\]
where $s,t,u$ are parameters satisfying
\[
  (s^{2}-tu)s\neq0,\quad
  (a-d)s^{3}+cs^{2}t-bs^{2}u-ct^{2}u+btu^{2}+(d-a)stu=0.
\]
Hence any $2\times2$ matrix requires two Toeplitz factors to decompose. 

While the generic bound $r = \lfloor n/2 \rfloor+ 1$ is sharp by
Corollaries~\ref{cor:sharp-t} and \ref{cor:sharp-h}, we see no reason that the bound $4r+1$ in Theorem~\ref{thm:every-t} should also be sharp. In fact, we are optimistic that the generic bound $r$ holds \textit{always}:
\begin{conjecture}\label{conj}
Every matrix $A \in\mathbb{C}^{n \times n}$ is a product of at most $\lfloor
n/2 \rfloor+ 1$ Toeplitz matrices or $\lfloor
n/2 \rfloor+ 1$ Hankel matrices.
\end{conjecture}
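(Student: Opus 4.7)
By Theorem~\ref{thm:main}, the image of $\rho_r$ contains an open dense subset of $\mathbb{C}^{n\times n}$, so the complement $Z := \mathbb{C}^{n\times n}\setminus\operatorname{im}(\rho_r)$ is contained in a proper Zariski-closed subset. The plan is to show $Z = \varnothing$, equivalently that $\operatorname{im}(\rho_r)$ is closed; since the image is constructible (by Chevalley's theorem), closedness would combine with density to give surjectivity.

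I would pursue two complementary strategies. The primary one is a valuative/limit argument: for any $A \in \mathbb{C}^{n\times n}$, choose a generic affine curve $A(t) = A + tB$ so that $A(t) \in \operatorname{im}(\rho_r)$ for all $t$ in a punctured neighborhood of $0$, yielding a family of factorizations $A(t) = T_1(t)\cdots T_r(t)$. Since the generic fiber $\rho_r^{-1}(A(t))$ has positive dimension $r(2n-1) - n^2$ (which is $3n/2 - 1$ for even $n$ and $(n-1)/2$ for odd $n$), one has flexibility in the choice of lift; the goal is to select the family $(T_1(t),\dots,T_r(t))$ algebraically in $t$ so that each $T_i(t)$ has a well-defined limit in $\operatorname{Toep}_n(\mathbb{C})$ as $t\to 0$, giving $A = T_1(0)\cdots T_r(0)$ by continuity of matrix multiplication. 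The complementary strategy is rank stratification: for rank-$k$ matrices with $k<n$, write $A = PB_{n-k}Q$ with $B_{n-k}$ Toeplitz of rank $k$, and adapt the explicit construction in \eqref{eq:tau} and Lemma~\ref{lem:exact} to allow one of the Toeplitz factors to become non-invertible in a controlled way, so that the rank drop is absorbed into a single factor without increasing the total count beyond $r$; for the invertible stratum one would need to sharpen Corollary~\ref{decomposition of invertible matrices} from $2r$ to $r$ factors.

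The main obstacle is that $\rho_r$ is not proper: factors $T_i(t)$ may diverge as $t \to 0$ while the product $A(t)$ remains bounded, so limits of naive lifts need not exist. Exploiting the positive-dimensional fiber to select a bounded section is a subtle global question about the geometry of $\rho_r^{-1}(A(t))$ near $Z$, with no off-the-shelf tool guaranteeing such a section exists. One concrete attempt would be to restrict to a codimension-$(r(2n-1) - n^2)$ affine slice of $\operatorname{Toep}_n^r(\mathbb{C})$ on which $\rho_r$ remains dominant but is generically finite, then analyze whether the restricted map extends continuously across the boundary locus; one might alternatively projectivize each Toeplitz factor separately and study the resulting rational map $\mathbb{P}(\operatorname{Toep}_n)^r \dashrightarrow \mathbb{P}(\mathbb{C}^{n\times n})$, seeking to rule out indeterminacy at boundary points. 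The difficulty of either route is presumably why the authors state this as a conjecture rather than a theorem, and the $n=2$ computation displayed just above, which requires an ad hoc case split, is a microcosm of the combinatorial intricacy one would face per rank stratum in a general argument.
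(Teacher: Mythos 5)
The statement you are addressing is labeled a \emph{conjecture} in the paper, and the authors do not prove it; they only note (immediately before stating it) that they ``see no reason that the bound $4r+1$ in Theorem~\ref{thm:every-t} should also be sharp'' and express optimism that the generic bound $r = \lfloor n/2 \rfloor + 1$ holds always. So there is no paper proof to compare against, and you are correct not to claim a proof yourself.

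Your writeup is an honest and mostly well-informed research plan, but it is not a proof, and a few of its internal moves are in tension with one another. First, the reduction to ``show $\operatorname{im}(\rho_r)$ is closed'' is clean and correct, and you correctly identify the real obstruction: $\rho_r$ is not proper (the domain is affine), so factors $T_i(t)$ in a one-parameter family may blow up as $t\to 0$ even while the product stays bounded. This is indeed where any limit argument must do its work. Second, however, your two remedies pull against each other. The valuative strategy needs the positive-dimensional fibers precisely to give slack for choosing a \emph{bounded} algebraic section, yet your fallback of restricting to a codimension-$(r(2n-1)-n^2)$ slice on which $\rho_r$ is generically finite destroys exactly that slack and, if anything, makes a bounded lift harder to find; a generically finite map need not satisfy any extension property at the boundary. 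Third, the projectivization suggestion runs into the fact that $\rho_r$ is not $\mathbb{G}_m^r$-equivariant in a single projective variable (only the overall scalar $\alpha_1\cdots\alpha_r$ is controlled), so the induced map $\mathbb{P}(\operatorname{Toep}_n)^r\dashrightarrow\mathbb{P}(\mathbb{C}^{n\times n})$ has a base locus (where some factor is the zero matrix or where the product vanishes identically) whose resolution is not routine; nothing you wrote rules out indeterminacy persisting on the exceptional divisor over the rank-deficient locus. Finally, your rank-stratification idea is plausible in spirit but, as you acknowledge, requires sharpening Corollary~\ref{decomposition of invertible matrices} from $2r$ to $r$ factors for invertible matrices, which is itself an open step; the $n=2$ case you cite is solved by an ad hoc calculation and gives no indication that it scales. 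In short: correct diagnosis of why this is hard, no error in the stated facts, but no proof and no single route carried far enough to predict that it would close.
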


\section{Toeplitz and Hankel decompositions are special}

We will see in this section that the Toeplitz and Hankel decompositions studied above are exceptional in two ways: (i) The Toeplitz or Hankel structure of the factors cannot be extended to arbitrary structured matrices that form a $(2n-1)$-dimensional subspace of $\mathbb{C}^{n \times n}$; (ii) The Toeplitz or Hankel structure of the factors cannot be further restricted to circulant, symmetric Toeplitz, or persymmetric Hankel --- even though these all seem plausible at first. Moreover (i) and (ii) hold even if we allow an infinite number of factors in the decomposition.

Noting that  $\operatorname{Toep}_{n}(\mathbb{C})$  and $\operatorname{Hank}_{n}(\mathbb{C})$ are both $(2n-1)$-dimensional subspaces of $\mathbb{C}^{n \times n}$, one might suspect that such decompositions are nothing special and would hold for any subspace $W \subseteq \mathbb{C}^{n \times n}$ of dimension $2n-1$. This is not the case. In fact, for any $d = 1,\dots, n^2-n+1$, we may easily construct a $d$-dimensional subspace $W \subseteq \mathbb{C}^{n \times n}$  whereby a decomposition of an arbitrary matrix into  products of $r$ elements of $W$ does not exist for any $r \in \mathbb{N}$. For example, $W$ could be taken to be any $d$-dimensional subspace of $\mathbb{C}^{n \times n}$ consisting of matrices of the form
\[
\begin{bmatrix}
*& *& \cdots &*\\
0& *&\cdots &*\\
\vdots & \vdots &\ddots&\vdots\\
0 & * &\cdots &*
\end{bmatrix},
\]
i.e., with zeros below the $(1,1)$-entry. Since such a structure is preserved under matrix product, the semigroup generated by $W$, i.e., the set of all products of matrices from $W$, could never be equal to all of $\mathbb{C}^{n \times n}$.

While here we are primarily concern with the semigroup generated by a subspace, it is interesting to also observe the following.
\begin{proposition}
Let $W$ be a proper associative subalgebra (with identity) of $\mathbb{C}^{n \times n}$.  Then $\dim W\le n^2-n+1$.
\end{proposition}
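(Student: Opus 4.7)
The plan is to combine Burnside's theorem on irreducible matrix algebras with a dimension count of stabilizers of subspaces. Since $W$ is a unital subalgebra, $\mathbb{C}^n$ becomes a $W$-module in the natural way. The first step is to show that $W$ cannot act irreducibly on $\mathbb{C}^n$: Burnside's theorem (over an algebraically closed field) says that any subalgebra of $\mathrm{End}(\mathbb{C}^n)$ acting irreducibly on $\mathbb{C}^n$ must equal all of $\mathrm{End}(\mathbb{C}^n) = \mathbb{C}^{n\times n}$. Since $W$ is assumed proper, this forces the existence of a $W$-invariant subspace $V \subseteq \mathbb{C}^n$ with $1 \le k := \dim V \le n-1$.

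The second step bounds $\dim W$ by the dimension of the stabilizer $\{A \in \mathbb{C}^{n\times n} : AV \subseteq V\}$ of this invariant subspace. After choosing a basis of $\mathbb{C}^n$ extending a basis of $V$, every element of $W$ must take the block form
\[
\begin{pmatrix} A_{11} & A_{12} \\ 0 & A_{22} \end{pmatrix}
\]
with $A_{11} \in \mathbb{C}^{k\times k}$, $A_{22} \in \mathbb{C}^{(n-k)\times(n-k)}$, and the lower-left $(n-k) \times k$ block equal to zero. Hence
\[
\dim W \le n^2 - k(n-k).
\]
The elementary quadratic $k \mapsto k(n-k)$ on $\{1,\dots,n-1\}$ attains its minimum at the endpoints $k = 1$ and $k = n-1$, where it equals $n-1$. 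Therefore $\dim W \le n^2 - (n-1) = n^2 - n + 1$, as required.

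The only substantive input in this argument is Burnside's theorem; the remainder is routine linear algebra, and the bound is consistent with (and achieved by) the stabilizer of $\mathrm{span}(e_1)$ exhibited in the example immediately preceding the proposition. The main obstacle, if one wished to give a self-contained proof without invoking Burnside, would be precisely to manufacture the invariant subspace $V$ directly from the properness and unitality of $W$; appealing to Burnside bypasses this cleanly.
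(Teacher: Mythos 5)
Your proof is correct, and it takes a genuinely different route from the paper's. The paper passes to Lie theory: it equips $W$ with the commutator bracket, splits $W = (W\cap\mathfrak{sl}_n(\mathbb{C}))\oplus\mathbb{C}\cdot I$ using the trace, and then quotes a result of Barnea--Shalev that a proper Lie subalgebra of $\mathfrak{sl}_n(\mathbb{C})$ has dimension at most $n^2-n$, which immediately gives $\dim W\le n^2-n+1$. Your argument instead uses the associative structure directly: Burnside's theorem forces a nontrivial proper invariant subspace $V$ of dimension $k$, so $W$ lies in the parabolic stabilizer of $V$, whose dimension $n^2-k(n-k)$ is maximized at $k=1$ or $k=n-1$. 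The two routes are essentially dual views of the same geometric fact (the extreme proper subalgebras are parabolics), but yours is more self-contained -- Burnside's theorem is a standard and elementary input, whereas the paper's argument outsources the heavy lifting to a cited Lie-theoretic result. Your approach also makes the extremal case ($k=1$, the stabilizer of a line, which is exactly the paper's example of matrices vanishing below the $(1,1)$ entry) transparent, rather than leaving it implicit in the cited bound.
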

\begin{proof}
Every associative algebra can be made into a Lie algebra by defining the Lie bracket as $[X,Y]=XY-YX$. So $W$ may be taken to be a Lie algebra. Let $\mathfrak{sl}_n(\mathbb{C})$ be the Lie algebra of traceless matrices. For any $X\in W$, we can write
\[
X=X_0+\frac{\operatorname{tr}(X)}{n} I,
\]
where $\operatorname{tr}(X)$ is the trace of $X$, $X_0$ is an element in $\mathfrak{sl}_n(\mathbb{C})$, and $I$ is the identity matrix. In particular, $X_0 \in W$ since both $I$ and  $X$ are in $W$. Hence we have 
\[
W= (W\cap \mathfrak{sl}_n (\mathbb{C}))\oplus \mathbb{C}\cdot I.
\]
Since $W\cap \mathfrak{sl}_n(\mathbb{C})$ is a proper Lie subalgebra of $\mathfrak{sl}_n(\mathbb{C})$ and the dimension of a proper Lie subalgebra of $\mathfrak{sl}_n(\mathbb{C})$ cannot exceed $n^2-n$ \cite{BS}, we must have
\[
\dim W\le n^2-n+1.\qedhere
\]
\end{proof}

On the other hand, one might perhaps think that any $n\times n$ matrix is expressible as a product of $n$ \textit{symmetric} Toeplitz matrices (note that these require exactly $n$ parameters to specify and form an $n$-dimensional linear subspace of $\operatorname{Toep}_n(\mathbb{C})$. We see below that this is false.

\begin{theorem}
Let $n \ge 2$. There exists $A \in \mathbb{C}^{n \times n}$ that cannot be expressed as a product of $r$ symmetric Toeplitz matrices for any $r \in \mathbb{N}$.
\end{theorem}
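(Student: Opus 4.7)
The plan is to identify a proper linear subspace of $\mathbb{C}^{n\times n}$ that is closed under multiplication and contains every symmetric Toeplitz matrix; any matrix outside this subspace will then be a counterexample. The key observation I would use is that every symmetric Toeplitz matrix commutes with the $n\times n$ exchange matrix $J$ (the permutation with ones along the anti-diagonal). In the notation of the paper this reads $T^{\mathsf{F}}=T^{\mathsf{S}}$, since left-multiplication by $J$ is the flip operator and right-multiplication by $J$ is the swap operator. Morally, a Toeplitz matrix is always persymmetric (symmetric about the anti-diagonal), and ordinary symmetry collapses this persymmetry into the single commutation identity $JT=TJ$.

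First I would dispatch the commutation $JT=TJ$ with a one-line computation: writing $T_{ij}=t_{i-j}$ with $t_k=t_{-k}$ by symmetry, both $(JT)_{ij}$ and $(TJ)_{ij}$ reduce to $t_{n+1-i-j}$ after using the evenness of $t$. Then I would invoke the elementary fact that the centralizer $Z(J):=\{A\in\mathbb{C}^{n\times n}:JA=AJ\}$ is a subalgebra of $\mathbb{C}^{n\times n}$. It follows immediately that every finite product $T_1T_2\cdots T_r$ of symmetric Toeplitz matrices lies in $Z(J)$, no matter how large $r$ is allowed to be.

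The remaining step is to exhibit a matrix outside $Z(J)$. I would take $A=E_{1,2}$, the matrix with a single $1$ in position $(1,2)$ and zeros elsewhere. A direct computation gives $JA=E_{n,2}$ and $AJ=E_{1,n-1}$; these positions differ for every $n\ge 2$ (the smallest case $n=2$ yielding $E_{2,2}\ne E_{1,1}$). Hence $A\notin Z(J)$ and $A$ cannot equal any product of symmetric Toeplitz matrices.

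I do not anticipate a genuine obstacle. The entire content of the proof is the symmetry observation $JT=TJ$ for symmetric Toeplitz $T$; once that is in hand, closure of $Z(J)$ under multiplication and a one-line witness finish the argument. The only modest care is in the persymmetry/symmetry bookkeeping, which is easily handled by the explicit formula for the entries.
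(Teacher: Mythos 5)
Your proof is correct and is essentially the paper's argument in different notation: the centralizer $Z(J)$ is exactly the set of matrices $A$ with $a_{ij}=a_{n+1-i,n+1-j}$, which is the proper, multiplicatively closed subvariety $S$ that the paper exhibits. Phrasing closure via ``the centralizer of $J$ is a subalgebra'' is a clean reformulation, and supplying the explicit witness $E_{1,2}\notin Z(J)$ is a small bonus, but the underlying obstruction is identical.
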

\begin{proof}
We exhibit a subset $S \subsetneq \mathbb{C}^{n \times n}$  that contains all symmetric Toeplitz matrices but also matrices that are neither symmetric nor Toeplitz. The desired result then follows by observing that there are $n \times n$ matrices that cannot be expressed as a product of elements from $S$.

Let the entries of $X, Y \in \mathbb{C}^{n \times n}$ satisfy $x_{ij}=x_{n-i+1,n-j+1}$ and $y_{ij}=y_{n-i+1,n-j+1}$ respectively, i.e.,
{\footnotesize
\[
X=\begin{bmatrix}
x_{11}    & x_{12}     & x_{13}  & \cdots &x_{1,n-2} & x_{1,n-1} & x_{1n}\\
x_{21}& x_{22}     & x_{23}  & \cdots &x_{2,n-2} & x_{2,n-1} & x_{2n}\\
x_{31}& x_{32} & x_{33}& \cdots &x_{3,n-2} &x_{3,n-1}  &x_{3n}\\
\vdots& \vdots & \vdots& \ddots &\vdots &\vdots &\vdots\\
x_{3n}& x_{3,n-1}& x_{3,n-2} & \cdots &x_{33} & x_{32} & x_{31}\\
x_{2n}& x_{2,n-1}& x_{2,n-2} & \cdots &x_{23} &x_{22} & x_{21}\\
x_{1n}& x_{1,n-1} & x_{1,n-2 }& \cdots&x_{13} & x_{12} & x_{11}  
\end{bmatrix},
Y=\begin{bmatrix}
y_{11}    & y_{12}     & y_{13}  & \cdots &y_{1,n-2} & y_{1,n-1} & y_{1n}\\
y_{21}& y_{22}     & y_{23}  & \cdots &y_{2,n-2} & y_{2,n-1} & y_{2n}\\
y_{31}& y_{32} & y_{33}& \cdots &y_{3,n-2} &y_{3,n-1}  &y_{3n}\\
\vdots& \vdots & \vdots& \ddots &\vdots &\vdots &\vdots\\
y_{3n}& y_{3,n-1}& y_{3,n-2} & \cdots &y_{33} & y_{32} & y_{31}\\
y_{2n}& y_{2,n-1}& y_{2,n-2} & \cdots &y_{23} &y_{22} & y_{21}\\
y_{1n}& y_{1,n-1} & y_{1,n-2 }& \cdots&y_{13} & y_{12} & y_{11}  
\end{bmatrix}.
\]}%
Let $Z=(z_{ij}) = XY$. Then it is easy to see that $z_{ij}=z_{n-i+1,n-j+1}$ since 
\[
z_{ij}=\sum_{k=1}^n x_{ik} y_{kj}=\sum_{k=1}^n x_{n-i+1,n-k+1} y_{n-k+1,n-j+1}= z_{n-i+1, n-j+1}.
\]
Let $S$ be the variety of matrices defined by equations $x_{ij}=x_{n-i+1,n-j+1}$ where $1 \le i,j \le n$. It is obvious that $S$ is a proper subvariety of $\mathbb{C}^{n\times n}$ and we just saw that it is closed under matrix product, i.e., $X,Y\in S$ implies $XY\in S$.

Since symmetric Toeplitz matrices are contained in $S$, product of any $r$ symmetric Toeplitz matrices must also be in $S$. Therefore for any $r\in \mathbb{N}$ and $A\not\in S$, it is not possible to express $A$ as a product of $r$ symmetric Toeplitz matrices. 
\end{proof}
We say that an $n\times n$ matrix  $X=(x_{ij})$ is \textit{persymmetric} if $x_{ij}=x_{n-j+1,n-i+1}$ for all $1\le i,j\le n$. Using the rotation operator $X \mapsto X^\mathsf{R}$ that sends a persymmetric matrix to a symmetric matrix and vice versa, we immediately deduce the following.
\begin{corollary}
Let $n\ge 2$. There exists $A\in \mathbb{C}^{n\times n}$ that cannot be expressed as a product of $r$ persymmetric Hankel matrices for any $r\in \mathbb{N}$.
\end{corollary}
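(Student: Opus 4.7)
The plan is to reduce directly to the preceding theorem by reusing the very same proper subvariety $S \subsetneq \mathbb{C}^{n \times n}$ constructed there, namely the one cut out by the central-symmetry relation $x_{ij} = x_{n-i+1, n-j+1}$. That theorem already established that $S$ is proper and closed under matrix multiplication, so if I can show that every persymmetric Hankel matrix belongs to $S$, then every product of persymmetric Hankel matrices lies in $S$ by iteration, and any $A \notin S$ will furnish the required counterexample.

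The containment is a one-line index check. A Hankel matrix has the form $h_{ij} = f(i+j)$, and persymmetry $h_{ij} = h_{n-j+1, n-i+1}$ forces $f(s) = f(2n+2-s)$; substituting gives $h_{n-i+1, n-j+1} = f(2n+2-i-j) = f(i+j) = h_{ij}$, so the matrix lies in $S$. Conceptually, symmetric Toeplitz matrices and persymmetric Hankel matrices share the same $180^{\circ}$ rotational symmetry about the center, and that is exactly the symmetry cut out by $S$.

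This is also why the remark preceding the statement invokes the rotation operator: $X \mapsto X^{\mathsf{R}}$ is a variety isomorphism sending persymmetric Hankel matrices bijectively onto symmetric Toeplitz matrices (combine the earlier fact that $\mathsf{R}$ sends Hankel to Toeplitz with a direct index check that $\mathsf{R}$ sends persymmetric to symmetric), and $S$ is $\mathsf{R}$-invariant. One could alternatively start from a hypothetical decomposition $A = H_1 \cdots H_r$, apply Lemma~\ref{identity1}(i) repeatedly, and argue that each factor appearing in $A^{\mathsf{R}}$ still lies in $S$, but this path forces one to track how the auxiliary operators $\mathsf{S}$ and $\mathsf{F}$ intrude after every rotation. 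The direct containment argument above simply bypasses this bookkeeping. There is no substantive obstacle here: the entire content of the corollary is that the central-symmetry witnessed by $S$ persists when one trades symmetric Toeplitz factors for persymmetric Hankel ones, so the work already done in the previous theorem carries across verbatim.
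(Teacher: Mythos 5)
Your proof is correct and, at the conceptual level, matches what the paper intends: the obstruction for persymmetric Hankel products is the same proper multiplicative subvariety $S$ cut out by $x_{ij}=x_{n-i+1,n-j+1}$ that was used for symmetric Toeplitz factors. Where you differ is in the mechanics: the paper's one-line appeal to the rotation operator only asserts a bijection between the two matrix classes, and---as you observe---$(AB)^{\mathsf{R}}=B^{\mathsf{RS}}A^{\mathsf{R}}$ rather than $B^{\mathsf{R}}A^{\mathsf{R}}$, so a literal transport of a persymmetric Hankel factorization along $\mathsf{R}$ requires tracking the auxiliary $\mathsf{S}$ and $\mathsf{F}$ operators. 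Your direct index check that a persymmetric Hankel matrix $h_{ij}=f(i+j)$ with $f(s)=f(2n+2-s)$ already lies in $S$ bypasses that bookkeeping entirely and yields the corollary from the previous theorem's closure property with no further work; there is no gap.
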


Lastly we show that the Toeplitz factors in a Toeplitz decomposition cannot be restricted to circulant matrices. Recall that a matrix $X$ is \textit{circulant} if it takes the form
\[
X=\begin{bmatrix}
x_0 & x_{n-1} & \cdots & x_2 & x_1 \\
x_1 & x_0       & \cdots & x_3 & x_2\\
\vdots & \vdots &\ddots & \vdots & \vdots\\
x_{n-2} & x_{n-3}&\cdots & x_{0} & x_{n-1}\\
x_{n-1} & x_{n-2}& \cdots & x_1 & x_0
\end{bmatrix}.
\]
The set of all circulant matrices is an $n$-dimensional linear subspace of $\operatorname{Toep}_n(\mathbb{C})$. 
\begin{proposition}
Let $n\ge 2$. There exists $A\in \mathbb{C}^{n\times n}$ that cannot be expressed as a product of $r$ circulant matrices for any $r\in \mathbb{N}$.
\end{proposition}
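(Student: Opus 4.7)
The plan is to exploit the well-known fact that circulant matrices form a commutative algebra, so in particular the set of circulant matrices is closed under matrix multiplication. Once we establish this closure, the semigroup generated by the $n$-dimensional subspace of circulant matrices is contained in that subspace itself, which is a proper subspace of the $n^2$-dimensional $\mathbb{C}^{n \times n}$ whenever $n \ge 2$. Any matrix outside this subspace (for instance, any non-circulant matrix, such as a generic Toeplitz matrix, or simply a matrix whose first row fails the circulant pattern) then supplies the desired counterexample $A$.

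First I would justify the closure under products. The cleanest route is to recall that every circulant matrix can be written as a polynomial in the cyclic shift $P = (\delta_{i,j+1 \bmod n})$; equivalently, there is a unitary matrix $F$ (the discrete Fourier transform) simultaneously diagonalizing every circulant matrix, so that each circulant $C$ takes the form $C = F^{-1} D F$ for some diagonal $D$. Consequently, for two circulant matrices $C_1 = F^{-1} D_1 F$ and $C_2 = F^{-1} D_2 F$ we have $C_1 C_2 = F^{-1} D_1 D_2 F$, which is again circulant. An easy induction then shows that any finite product of circulant matrices is circulant.

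Having established this, I would let $\operatorname{Circ}_n(\mathbb{C}) \subset \mathbb{C}^{n \times n}$ denote the $n$-dimensional linear subspace of circulant matrices. The argument concludes by noting that the set of all products $C_1 C_2 \cdots C_r$ with $C_i \in \operatorname{Circ}_n(\mathbb{C})$ and $r \in \mathbb{N}$ is contained in $\operatorname{Circ}_n(\mathbb{C})$, and that $\operatorname{Circ}_n(\mathbb{C}) \subsetneq \mathbb{C}^{n \times n}$ when $n \ge 2$ because $n < n^2$. Therefore picking any $A \in \mathbb{C}^{n \times n} \setminus \operatorname{Circ}_n(\mathbb{C})$ yields a matrix that cannot be written as a product of $r$ circulant matrices for any $r$.

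There is no real obstacle here: the only subtlety is remembering that circulant matrices form a commutative subalgebra (so closure under multiplication is automatic), in stark contrast to Toeplitz or Hankel matrices whose products are not in general Toeplitz or Hankel. This is precisely why the Toeplitz and Hankel decomposition results are nontrivial, whereas the circulant version collapses immediately.
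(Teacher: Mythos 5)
Your proof is correct, and it takes a slightly different route from the paper's. You observe that circulant matrices form a commutative subalgebra of $\mathbb{C}^{n\times n}$ (simultaneously diagonalized by the DFT matrix), so any finite product of circulants is again circulant and hence lies in the $n$-dimensional subspace $\operatorname{Circ}_n(\mathbb{C})$; since $n < n^2$, any non-circulant matrix is a counterexample. The paper instead notes only that every circulant matrix has $v = [1,1,\dots,1]^{\mathsf{T}}$ as an eigenvector, so every finite product of circulants also has $v$ as an eigenvector, and the counterexample is any matrix without this eigenvector. The two arguments have exactly the same shape --- find a property preserved by products that proper circulants satisfy but a generic matrix does not --- but your invariant (being circulant) is strictly stronger than the paper's (having $v$ as an eigenvector), and your proof needs the fact that circulants are closed under multiplication, which the paper's argument avoids entirely. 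The paper's version is thus a touch more economical, while yours makes the structural reason (circulants form a subalgebra, unlike Toeplitz or Hankel matrices) more explicit, which is a useful contrast to keep in mind.
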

\begin{proof}
Let $X$ be a circulant matrix. Then $v=[1,1,\dots, 1]^\mathsf{T}$ is an eigenvector of $X$. If every $n\times n$ matrix is a product of $r$ circulant matrices for some $r\in \mathbb{N}$, then $v$ must be an eigenvector of every $n \times n$ matrix, which is evidently false.
\end{proof}

\section{Computing the Toeplitz and Hankel decompositions}

We will discuss two approaches toward computing Toeplitz decompositions for generic matrices. The first uses numerical algebraic geometry and yields a decomposition with the minimal number, i.e., $r = \lfloor n/2 \rfloor +1$, of factors but is difficult to compute in practice. The second uses numerical linear algebra and is $O(n^3)$ in time complexity but requires an additional $n$ permutation matrices and yields a decomposition with $2n$ Toeplitz factors.

These proposed methods are intended to: (i) provide an idea of how  the purely existential discussions in  Sections~\ref{sec:toep} and \ref{sec:hank} may be made constructive, and (ii) shed light on the computational complexity of such decompositions (e.g.\ the second method is clearly polynomial time). Important issues like backward numerical stability have been omitted from our considerations. Further developments are necessary before these methods can become practical for large $n$ and these will be explored in \cite{YL}.

\subsection{Solving a system of linear and quadratic equations}

For notational convenience later, we drop the subscript $r$ and write 
\[
\rho:\operatorname{Toep}_n^r(\mathbb{C})\to \mathbb{C}^{n\times n}
\]
for the map $\rho_r$ we introduced in Theorem~\ref{thm:main}.
We observe that $\operatorname{Toep}_n(\mathbb{C}) \simeq \mathbb{C}^{2n-1}$ and therefore we may embed $\operatorname{Toep}_n^r(\mathbb{C})$, being a product of $r$ copies
of $\operatorname{Toep}_n(\mathbb{C})$, via the \textit{Segre embedding} \cite{Land} into $(\mathbb{C}^{2n-1})^{\otimes r}$. It is easy to see that we then have the following factorization of $\rho$:
\[
\begin{tikzcd}
\operatorname{Toep}_{n}(\mathbb{C})\times\dots\times \operatorname{Toep}_{n}(\mathbb{C}) \arrow{r}{\rho} \arrow[swap]{d}{i} & \mathbb{C}^{n\times n} \\
\mathbb{C}^{2n-1}\otimes\dots\otimes \mathbb{C}^{2n-1} \arrow[swap]{ur}{\pi}
\end{tikzcd}
\]
Here $i$ denotes the Segre embedding of $\operatorname{Toep}_n^r(\mathbb{C})$ into ${(\mathbb{C}^{2n-1})}^{\otimes r}$ and $\pi$ is a linear projection from ${(\mathbb{C}^{2n-1})}^{\otimes r}$ onto $\mathbb{C}^{n\times n}$. The image of the Segre embedding is the well-known \textit{Segre variety}. Note that like $\rho$, both $i$ and $\pi$ depend on $r$ but we omitted subscripts to avoid notational clutter. An explicit expression for $i$ is as an outer product $i (t_1,\dots,t_r) = t_1 \otimes \dots \otimes t_r$ where $t_1,\dots, t_r \in \mathbb{C}^{2n-1}$ are the vectors of parameters (e.g.\ first column and row) that determine the Toeplitz matrices $T_1,\dots, T_r$ respectively.  There is no general expression for $\pi$, but for a fixed $r$ one can determine $\pi$ iteratively.

For example, if $n=2$,  we set $r=2$ so that $\rho$ is dominant by  Theorem~\ref{thm:main}. Let $X,Y$ be two Toeplitz matrices, then
\[
X=\begin{bmatrix}
x_0 & x_1\\
x_{-1}& x_0
\end{bmatrix}, \quad Y=\begin{bmatrix}
y_0 & y_1\\
y_{-1}& y_0
\end{bmatrix}, \quad
X Y=\begin{bmatrix}
x_0y_0+x_1y_{-1} & x_0y_1+x_1y_0\\
x_{-1}y_0+x_0y_{-1} & x_{-1}y_1+x_0y_0
\end{bmatrix}.
\]
The map $\rho: \operatorname{Toep}_2(\mathbb{C})\times \operatorname{Toep}_2(\mathbb{C})\to \mathbb{C}^{2\times 2}$ can be factored as $\rho=\pi\circ i$,\[
\begin{tikzcd}
\operatorname{Toep}_2(\mathbb{C})\times \operatorname{Toep}_2(\mathbb{C})\arrow{r}{\rho} \arrow[swap]{d}{i} & \mathbb{C}^{2\times 2} \\
\mathbb{C}^{3}\otimes \mathbb{C}^3 \arrow[swap]{ur}{\pi}
\end{tikzcd}
\]
where $i$ is the Segre embedding of $\operatorname{Toep}_2(\mathbb{C})\times \operatorname{Toep}_2(\mathbb{C})$ into $\mathbb{C}^3\otimes \mathbb{C}^3$ and $\pi$ is the projection of $\mathbb{C}^3\otimes \mathbb{C}^3$ onto $\mathbb{C}^{2\times 2}$. More specifically we have
\[
i\left(
\begin{bmatrix}
x_0 & x_1\\
x_{-1}& x_0
\end{bmatrix},
\begin{bmatrix}
y_0 & y_1\\
y_{-1}& y_0
\end{bmatrix}\right)
=
\begin{bmatrix}
x_{-1}y_{-1} &  x_{-1}y_0 &  x_{-1}y_1\\
x_{0}y_{-1} & x_0y_0 & x_0y_1\\
x_1y_{-1} & x_1y_0 &x_1y_{1}
\end{bmatrix}
\]
and 
\[
\pi\left(
\begin{bmatrix}
z_{-1,-1} &  z_{-1,0} & z_{-1,1}\\
z_{0,-1} & z_{0,0} & z_{0,1}\\
z_{1,-1} & z_{1,0} & z_{1,1}
\end{bmatrix}\right)
=\begin{bmatrix}
z_{0,0}+z_{1,-1} & z_{0,1}+z_{1,0}\\
z_{-1,0}+z_{0,-1} & z_{-1,1}+z_{0,0}
\end{bmatrix}.
\]
Now given a $2\times 2$ matrix $A$, a decomposition of $A$ into the product of two Toeplitz matrices is equivalent to finding an intersection of the Segre variety $V=i(\operatorname{Toep}_2(\mathbb{C})\times \operatorname{Toep}_2(\mathbb{C}))$ with the affine linear space $\pi^{-1}(A)$. It is well-known that the Segre variety $V$ is cut out by quadratic equations given  by the vanishing of $2\times 2$ minors of
\[
\begin{bmatrix}
z_{-1,-1} & z_{-1,0} & z_{-1,1}\\
z_{0,-1}  & z_{0,0} & z_{0,1}\\
z_{1,-1} &z_{1,0} &z_{1,1}
\end{bmatrix}.
\]
These nine quadratic equations defined by the vanishing of $2 \times 2$ minors, together with the four linear equations that define $\pi^{-1}(A)$ can be used to calculate the decomposition of $A$. In summary, the problem of computing a Toeplitz decomposition of a $2\times 2$ matrix is reduced the problem of computing a solution to a system of nine quadratic and four linear equations. More generally this extends to arbitrary dimensions --- computing a Toeplitz decomposition of an $n \times n$ matrix is equivalent to computing a solution to  a linear-quadratic system
\begin{equation}\label{lqsys}
c_i^\mathsf{T} x = d_i, \quad i = 1,\dots,l,\qquad
x^\mathsf{T} E_j x = 0, \quad  j = 1,\dots,q.
\end{equation}
The $l$ linear equations form a linear system $C^\mathsf{T} x = d$ where $c_1,\dots,c_l$ are the columns of the matrix $C$ and $d = [d_1,\dots,d_l]^\mathsf{T}$; these define the linear variety $\pi^{-1}(A)$. The $q$ quadratic equations define the Segre variety $V$. By Theorem~\ref{thm:main}, the two varieties must have a nonempty intersection, i.e., a solution to \eqref{lqsys} must necessarily exist,  for any generic $A$ (and for all $A$ if Conjecture~\ref{conj} is true).
Observe that $d$ depends on the entries of the input matrix $A$ but the matrix $C$ and the symmetric matrices $E_1,\dots,E_q$ depend only on $r$ and are the same regardless of the input matrix $A$.

Such a system may be solved symbolically using computer algebra techniques (e.g.\ Macaulay2 \cite{macaulay}) or numerically via homotopy continuation techniques (e.g.\ Bertini \cite{bertini}). The complexity of solving  \eqref{lqsys}  evidently depends on both $l$ and $q$ but is dominated by $q$, the number of quadratic equations. It turns out that $q$ may often be reduced, i.e., some of the quadratic equations may be dropped from \eqref{lqsys}. For example, suppose  the entries of $X$ and $Y$ are all nonzero in the $2 \times 2$ example above. Observe that the linear equations defining $\pi^{-1}(A)$ do not involve  $z_{-1,-1}$ and $z_{1,1}$. So instead of the original system of \textit{nine} quadratic and four linear equations, we just need to consider a reduced system of \textit{two} quadratic equations $z_{-1,0}z_{0,1}-z_{0,0}z_{-1,1}=0$, $z_{0,-1}z_{1,0}-z_{0,0}z_{1,-1}=0$ and four linear equations.

In the $3 \times 3$ case, $\rho$ factorizes as
\[
\begin{tikzcd}
\operatorname{Toep}_3(\mathbb{C})\times \operatorname{Toep}_3(\mathbb{C})\arrow{r}{\rho} \arrow[swap]{d}{i} & \mathbb{C}^{3\times 3} \\
\mathbb{C}^{5}\otimes \mathbb{C}^5 \arrow[swap]{ur}{\pi}
\end{tikzcd}
\]
Denoting the Toeplitz factors by $X=[x_{j-i}], Y=[y_{j-i}] \in  \operatorname{Toep}_3(\mathbb{C}) $, the maps $i$ and $\pi$ are given by
\begin{equation}\label{eq:3x3q}
i([x_{k}],[y_m])=[x_{k}y_{m}]\in\mathbb{C}^{5 \times 5}, \quad k,m=-2,-1,0,1,2,
\end{equation}
and
\begin{equation}\label{eq:3x3l}
\pi ([z_{km}])=\left[\sum\nolimits_{k+m =j-i, \; 1-i\le k, \; m\le j-1} z_{km} \right] \in\mathbb{C}^{3\times 3}, \quad i,j = -1,0,1.
\end{equation}
The vanishing of the $2\times 2$ minors of \eqref{eq:3x3q} yields a system of ten quadratic equations and setting $\pi(Z) = A$  in \eqref{eq:3x3l} yields a system of nine linear equations. Any common solution, which must exist by Theorem~\ref{thm:main}, gives us a decomposition of the generic $3\times 3$ matrix $A$.

\begin{example}
The following is an explicit numerical example computed by solving the linear-quadratic system with Bertini \cite{bertini}.
\[
\begin{bmatrix}
1 &2& 3\\
4 &5& 6\\
7 &8& 9
\end{bmatrix}
=
 \begin{bmatrix}
2.2222&    0.8889 &  -0.4444\\
    3.5556    &2.2222 &   0.8889\\
    4.8889   & 3.5556  &  2.2222
\end{bmatrix}
\begin{bmatrix}
      0.2500 &   1.0000   & 1.0000\\
    1.0000    &0.2500   & 1.0000\\
    1.0000    &1.0000 &   0.2500
\end{bmatrix} .
 \]
\end{example}

While our discussion is about Toeplitz decomposition, the method described in this section applies verbatim to Hankel decomposition by factorizing $\rho = \rho_r$ in \eqref{eq:hdecom} instead.

\subsection{Using Gaussian elimination}

We will exhibit an algorithm to decompose a generic $n\times n$ matrix $A$ into a product of $2n$ Toeplitz matrices $T_1,\dots,T_{2n}$ and $n$ permutation matrices $P_1,\dots, P_n$ in the form
\begin{equation}\label{eq:ge}
A = T_1 T_2 P_1 T_3 T_4 P_2 \cdots  T_{2n-1} T_{2n} P_n.
\end{equation}

For simplicity, we assume that all computations are in exact arithmetic. In particular, note for a generic matrix we may perform Gaussian elimination without pivoting if we disregard numerical stability issues.
\begin{description}
\item[Input] Let $A=(a_{ij}) \in \mathbb{C}^{n \times n}$  be generic.

\item[Step 1] Using Gaussian elimination,  determine column vectors $v_1,v_2,\dots, v_n$ such that 
\[
A=(I+v_1 e_1^\mathsf{T})(I+v_2 e_2^\mathsf{T})\cdots (I+v_n e_n^\mathsf{T}),
\] 
where $e_j$ is the standard basis of $\mathbb{C}^n$ and $I$ is the $n\times n$ identity  matrix.
\item[Step 2] For each factor $I+v_k e_k^\mathsf{T}$, we write
\[
I+v_k e_k^\mathsf{T}=\Pi_k  (I+w_k e_1^\mathsf{T}) \Pi_k,
\]
where $\Pi_k$ is the permutation matrix corresponding to the permutation $(1\shortrightarrow k) \in \mathfrak{S}_n$ and $w_k=\Pi_k v_k$.
\item[Step 3] For each factor  $I+w_k e_1^\mathsf{T}$, where $w_k=[w_{k1}, w_{k2},\dots, w_{kn}]^\mathsf{T}$ we define
\[
W_k =\begin{bmatrix}
w_{kn}&w_{k,n-1} & w_{k,n-2} & \cdots &w_{k3} & w_{k2} & w_{k1}\\
0&w_{kn} & w_{k,n-1} & \cdots &w_{k4} & w_{k3}& w_{k2}\\
0&0 & w_{kn} & \cdots &w_{k5} & w_{k4}& w_{k3}\\
\vdots&\vdots& \vdots &\ddots& \vdots &\vdots &\vdots\\
0&0 & 0 &   & w_{kn} & w_{k,n-1}& w_{k,n-2}\\
0&0 & 0 & \cdots & 0 & w_{kn}& w_{k,n-1}\\
0&0 & 0 & \cdots & 0 & 0& w_{kn}\\
\end{bmatrix}.
\]
Then 
\[
I+w_k e_1^\mathsf{T}=W_k (W_k ^{-1}+E_{n1}) \quad
\text{where} \quad 
E_{n1} =
\begin{bmatrix}
0 & 0 & \cdots & 0 &0\\
0 & 0 & \cdots & 0 & 0\\
\vdots&\vdots &\ddots &\vdots &\vdots\\
0 & 0 & \cdots & 0 &0\\
1 & 0 & \cdots & 0 & 0
\end{bmatrix}.
\]
Take $T_k=W_k$ and $T_{k+1}=W_k^{-1}+E_{n1}$.
\item[Output] The required Toeplitz factors are $T_k$'s and $T_{k+1}$'s and the permutation factors are $P_k :=\Pi_k\Pi_{k+1}$ for $k=1,\dots, n$.
\end{description}
It is easy to see that this indeed gives a decomposition of $A$ into a product of Toeplitz matrices and permutation matrices as in \eqref{eq:ge}. Computational cost is dominated by the $O(n^3)$ arithmetic steps required in Gaussian elimination. Since the inversion of an upper-triangular Toeplitz matrix requires only $O(n \log n)$ arithmetic steps (cf. Section~\ref{sec:why}), this algorithm has $O(n^3)$ complexity.
\begin{example}
The following is an explicit numerical example computed using the algorithm described above. We generate a random $5\times 5$ matrix with small integer entries
\[
A=
\begin{bmatrix}
2 &5& 2& 5& 3\\
4 &5& 5& 2& 2\\
2 &3& 2& 1& 5\\
3 &1& 5& 2& 3\\
4 &1& 2& 4& 3\\
\end{bmatrix}.
\]
The output of the algorithm is reproduced below. We use the notation $T(v,w)$ to denote the Toeplitz matrix whose first row is $v \in \mathbb{C}^n$ and first column is $w \in \mathbb{C}^n$. We represent permutation matrices as elements in the symmetric group $\mathfrak{S}_n$.
{\footnotesize
\begin{align*}
T_1&=T([4,3,2,4,1],[4,0,0,0,0]),\\
T_2&=T([0.25,-0.1875,0.015625,-0.16796875,0.2431640625],[0.25,0,0,0,1]),\\
P_1&=(1 \shortrightarrow 2),\\
T_3&=T([-9,-6.5,-2,2.5,-6],[-9,0,0,0,0]),\\
T_4&=T([-0.11111,0.0802469,0.0332647,-0.02467230,0.121575],[-0.11111,0,0,0,1]),\\
P_2&=(1 \shortrightarrow 3 \shortrightarrow 2),\\
T_5&=T([-3.8,0.7,1.5,-0.2,-1.4],[-3.8,0,0,0,0]),\\
T_6&=T([-0.26316,-0.0484764, -0.112808,-0.026065, 0.050173],[-0.26315,0,0,0,1]),\\
P_3&=(1\shortrightarrow 4 \shortrightarrow 3),\\
T_7&=T([16,-4.5,2,2,2.5],[16,0,0,0,0]),\\
T_8&=T([-0.0625,0.017578,-0.0028687,-0.0108166,-0.014646],[0.0625,0,0,0,1]),\\
P_4&=(1 \shortrightarrow 5 \shortrightarrow 4),\\
T_9&=T([25.85714,2.85714,-14.71429,-6.7142857,-76.71429],[25.85714,0,0,0,0]),\\
T_{10}&=T([0.038674,-0.004273,0.02248,0.00513,0.125856],[0.038674,0,0,0,1]),\\
P_5&=(1 \shortrightarrow 5).
\end{align*}}%
We have
\[
A = T_1 T_2 P_1 T_3 T_4 P_2 T_5 T_6 P_3 T_7 T_8 P_4 T_9 T_{10} P_5.
\]
\end{example}

It is straightforward to modify the algorithm to obtain a decomposition of a generic matrix into a product of $2n$ Hankel matrices and $n+1$ permutation matrices. Let $\Pi = [\pi_{ij}]$ be the permutation matrix defined by
\[
\pi_{ij}=\begin{cases}
0& i+j\ne n+1,\\
1&i+j=n+1.
\end{cases}
\]
It is easy to see that multiplying a Toeplitz matrix $T_i$ on  either the left or right by $\Pi$ gives a Hankel matrix $H_i$ and vice versa. Now we may write the Toeplitz factors $T_i$'s in \eqref{eq:ge} as $T_i = \Pi H_i$ if $i$ is odd and  $T_i = H_i\Pi$ if $i$ is even. We also write $P'_i =\Pi P_i \Pi$ for $i=1,\dots,n-1$, and $P'_n = \Pi P_n$. Then we obtain
\[
A = T_1 T_2 P_1 T_3 T_4 P_2 \cdots  T_{2n-1} T_{2n} P_n
= \Pi H_1 H_2 P'_1 H_3 H_4 P'_2 \cdots H_{2n-1} H_{2n} P'_n
\]
as required.

\section*{Acknowledgments}
We thank Professor T.~Y.~Lam for inspiring this work. This article is dedicated to him on his 70th birthday. 
This work is partially supported by AFOSR FA9550-13-1-0133, NSF DMS 1209136, and NSF DMS 1057064.

\end{document}